\theoremstyle{definition}
 \newtheorem{theorem}{Theorem}
 \newtheorem{proposition}[equation]{Proposition}
 \newtheorem{definition}[equation]{Definition}
 \newtheorem{remark}[equation]{Remark}
 \newtheorem{lemma}[equation]{Lemma}
 \newtheorem{corollary}[equation]{Corollary}
 \newtheorem{example}[equation]{Example}
\newcommand\RR{\mathbb R}
\newcommand\ZZ{\mathbb Z}
\newcommand\NN{\mathbb N}
\newcommand\G{\Gamma}
\newcommand{\dist}{\mathrm{dist}}
\numberwithin{equation}{section}
\begin{document}

\date{\today}
\title{Sandpile solitons in higher dimensions}
\author[N. Kalinin]{Nikita Kalinin}\thanks{Guangdong Technion-Israel Institute of Technology}\thanks{ORCID: 0000-0002-1613-5175. Research is supported by the Russian Science Foundation grant \textnumero 20-71-00007.}  

\address{Guangdong Technion-Israel Institute of Technology \\ 241 Daxue Road, Jinping District, Shantou, Guangdong Province, China}

\email{nikaanspb\{at\}gmail.com}

\keywords{37B15, 31A05, 14T05, 28A80, 35B36, sandpile model, discrete harmonic functions, solitons, husking, superharmonic functions, cellular automata, tropical geometry, strings}
\begin{abstract}
Let $p\in\ZZ^n$ be a primitive vector and $\Psi:\mathbb Z^n\to \mathbb Z, z\to \min(p\cdot z, 0)$. The theory of {\it husking} allows us to prove that there exists a pointwise minimal function among all integer-valued superharmonic functions equal to $\Psi$ ``at infinity''.

We apply this result to sandpile models on $\ZZ^n$.  We prove existence of so-called {\it solitons} in a sandpile model, discovered in 2-dim setting by S. Caracciolo, G. Paoletti, and A. Sportiello and studied by the author and M. Shkolnikov in previous papers. We prove that, similarly to 2-dim case, sandpile states, defined using our husking procedure, move changeless when we apply the sandpile wave operator (that is why we call them solitons).

We prove an analogous result for each lattice polytope $A$ without lattice points except its vertices. Namely, for each function $$\Psi:\mathbb Z^n\to \mathbb Z, z\to \min_{p\in A\cap \ZZ^n}(p\cdot z+c_p), c_p\in \ZZ$$ there exists a pointwise minimal function among all integer-valued superharmonic functions coinciding with $\Psi$ ``at infinity''. The laplacian of the latter function corresponds to what we observe when solitons, corresponding to the edges of $A$, intersect (see Figure~\ref{fig_soliton}). 

{\bf keywords:} discrete harmonic functions, discrete superharmonic functions, sandpiles, solitons 
\end{abstract}

\maketitle

\section{Introduction. Sandpile patterns on $\ZZ^n$}
 Consider $\ZZ^n$ as a graph with vertices $z\in\ZZ^n$. If the Euclidean distance between $z,z'\in\ZZ^n$ is one, we connect $z,z'$ by an edge and write $z\sim z'$. A sandpile {\it state} is a function $\phi:\ZZ^n\to \ZZ_{\geq 0}$; $\phi(z)$ can be thought of the number of sand grains in $z\in\ZZ^n$. We can {\it topple} a vertex $z$ by sending $2n$ grains from $z$ to its neighbors, i.e. we subtract $2n$ grains from $z$ and each neighbor $z'\sim z$ gets one grain. If $\phi(z)\geq 2n$, such a toppling is called {\it legal}. A {\it relaxation} is doing legal topplings while it is possible, the result of the relaxation of $\phi$ is denoted by $\phi^\circ$, it does not depend of the order of topplings.  The toppling function of a relaxation is the function $\ZZ^n\to \ZZ_{\geq 0}$ which at a vertex $z$ is equal to the number of topplings performed at $z$ during this relaxation. A state $\phi$ is {\it stable} if $\phi\leq 2n-1$ everywhere.

\begin{definition}
\label{def_waves}
Let $z,z'\in \ZZ^n$ be such that $\phi(z)=\phi(z') = 2n-1, z\sim z'$. By {\it sending a wave} from $z$ we mean making a toppling at $z$, followed by the relaxation. We denote the obtained state by $W^z\phi$.
\end{definition}

Note that after the first toppling the vertex $z$ has $-1$ grain, and $z'$ has $2n$ grains, so $z'$ subsequently topples and $z$ has a non-negative number of grains again. If we start with a stable state, then during a wave each vertex topples at most once, because it has not enough grains to topple the second time if all its neighbors toppled only once.

\begin{definition}
Let $q\in\ZZ^n\setminus\{0\}$. A state $\phi$ is called $q$-{\it movable}, if there exists $z_0\in\ZZ^n$ such that $W^{z_0}\phi(z) = \phi(z+q)$ for all $z\in\ZZ^n$ and it is not true that $\phi(z) = \phi(z+q)$ for all $z$. For $L$ being a rank $n-1$ sublattice in $\ZZ^n$, a state $\phi$ is called $L$-{\it periodic} if $\phi(z)=\phi(z+p)$ for each $z\in\ZZ^n, p\in L$. A state $\phi$ is called  {\it hyperplane-shaped} of direction $q\in\ZZ^n\setminus \{0\}$ if there exist constants $c_1,c_2$ such that the set $\{z\vert \phi(z)\ne 2n-1\}$ belongs to the set $\{z\vert  c_1\leq q\cdot z\leq c_2\}$. 
\end{definition}

The first aim of this paper is to classify all {\bf periodic hyperplane-shaped} movable states on $\ZZ^n$, we call them {\it solitons}. Recall that a vector $q\in\ZZ^n$ is called {\it primitive} if there exists no $k\in \ZZ, k>1, q'\in \ZZ^n$ such that $q=k\cdot q'$.

\begin{theorem}
\label{th_main}
For each primitive vector $q\in\ZZ^n$ there exists a unique (up to a translation in $\ZZ^n$) movable state which is hyperplane-shaped of direction $q$. Furthermore, this state is $q$-movable.
\end{theorem}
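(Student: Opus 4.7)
The plan is to construct $\phi$ directly from the husking theorem and then verify its four properties in turn. Given a primitive $q \in \ZZ^n$, set $\Psi(z) := \min(q \cdot z, 0)$ and let $F$ be the pointwise minimal integer-valued superharmonic function coinciding with $\Psi$ at infinity, as produced by the husking theorem advertised in the abstract. Define $\phi(z) := (2n-1) + \Delta F(z)$, where $\Delta F(z) := \sum_{z' \sim z} F(z') - 2n F(z)$.

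The first task is to check $\phi$ is a stable sandpile state, i.e., $0 \leq \phi \leq 2n - 1$. The upper bound follows from $\Delta F \leq 0$ (superharmonicity). For the lower bound I argue by minimality: if $\Delta F(z_0) \leq -2n$ at some $z_0$, then $F - \mathbf{1}_{\{z_0\}}$ is still integer superharmonic (at $z_0$ the Laplacian rises by $2n$ to a value $\leq 0$; at each neighbor it drops by $1$ and stays $\leq 0$) and still equals $\Psi$ at infinity, contradicting minimality. Hyperplane-shapedness is immediate: $\Psi$ is harmonic outside a bounded strip around $\{q \cdot z = 0\}$, and $F = \Psi$ at infinity, so $\phi = 2n-1$ outside some strip $\{c_1 \leq q \cdot z \leq c_2\}$. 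The $L$-periodicity with $L := q^\perp \cap \ZZ^n$ (of rank $n-1$ since $q$ is primitive) follows from uniqueness of the minimum (the pointwise minimum of two admissible functions is admissible) together with $L$-invariance of the admissible class, since $\Psi$ itself is $L$-periodic.

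The core step is $q$-movability. The candidate toppling function is $u(z) := F(z + q) - F(z)$; a direct computation gives $\Delta u(z) = \Delta F(z + q) - \Delta F(z) = \phi(z+q) - \phi(z)$, which is precisely the Laplacian needed to realize the shift $W^{z_0}\phi = \phi(\cdot + q)$. The delicate point is that $u$ (up to the additive constant that is the only bounded harmonic adjustment on $\ZZ^n$) must actually coincide with the toppling function of a \emph{single} wave launched from a suitable seed $z_0$: this demands $u \in \{0, 1\}$ and the existence of a legal toppling order on $\{u = 1\}$. I would extract both from minimality of $F$: any excursion $u(z) \geq 2$ should reveal slack permitting construction of a strictly smaller admissible candidate for $F$, contradicting the husking output; the seed $z_0$ is then chosen on the boundary of $\{u=1\}$ where $\phi = 2n-1$, and the ``downhill'' structure of the wave along $\{u=1\}$ provides the legal order. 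Making this combinatorial/analytical translation precise, and handling the infinite propagation of the wave on $\ZZ^n$ in a controlled way, is the principal obstacle of the proof.

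Finally, uniqueness up to translation is obtained by reversing the construction. Given any other hyperplane-shaped movable state $\phi'$ of direction $q$, one defines $F'$ (unique up to an affine term) by $\Delta F' = \phi' - (2n-1)$; the hyperplane shape forces $F'$ to coincide with $\Psi$ at infinity once the affine normalization is fixed, and the movability condition rules out any further flexibility. Minimality of $F$ then forces $F' = F$ up to translation, so $\phi' = \phi$ up to translation.
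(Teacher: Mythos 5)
Your construction of the candidate state coincides with the paper's (take the pointwise minimal husking $F$ of $\Psi(z)=\min(q\cdot z,0)$ and set $\phi=2n-1+\Delta F$), and your checks of stability, $L$-periodicity and hyperplane-shapedness are correct. The two substantive claims, however, have genuine gaps. For movability: the candidate toppling function $u(z)=F(z+q)-F(z)$ is wrong as written, since far on the side where $F(z)=q\cdot z$ it equals $q\cdot q=|q|^2$, whereas the toppling function of a single wave takes values in $\{0,1\}$ (each vertex topples at most once during a wave). One must instead use $h(z)=F(z+q')-F(z)$ for a vector $q'$ with $q\cdot q'=1$; by $L$-periodicity a single wave shifts the state by one unit of the functional $q\cdot z$, not by the vector $q$ itself (this is how Proposition~\ref{cor_wavegp} is stated). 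More importantly, the step you yourself flag as ``the principal obstacle'' is precisely where the paper's key tool enters: the Least Action Principle for waves characterizes the wave toppling function $H^{z_0}$ as the pointwise minimal $H\geq 0$ with $H(z_0)=1$ and $\phi+\Delta H\leq 2n-1$. This eliminates any need to verify $h\in\{0,1\}$ or to exhibit a legal toppling order by hand: admissibility of $h$ (nonnegativity follows from Lemma~\ref{lem_notevident}) gives $H^{z_0}\leq h$, and the reverse inequality follows because $F+H^{z_0}$ is a superharmonic function agreeing with the translated $\Psi$ at infinity, hence dominates its minimal husking. Without this variational characterization your sketch does not close.

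For uniqueness, the assertion that ``minimality of $F$ then forces $F'=F$'' is backwards: minimality only yields $F\leq F'$ after normalization, and nothing you write excludes, say, $F'$ having asymptotic slope $2q$ on one side, nor explains why an arbitrary periodic hyperplane-shaped movable state must arise from the \emph{minimal} superharmonic interpolant. The paper closes this by sending $k$ waves from a distant point $z_0$: by the Least Action Principle the cumulative toppling function $H^{kz_0}$ is the minimal $h\geq 0$ with $h(z_0)=k$ and $\phi'+\Delta h\leq 2n-1$; it is bounded by $\min(k,q\cdot z+m)$ on the cylinder $\ZZ^n/L$; and the rigidity of integer-valued harmonic functions of linear growth (Lemma~\ref{lemma_harmonic}) together with monotonicity forces $H^{kz_0}$ to equal $q\cdot z+c$ on one side and $k$ on the other, identifying $\phi'$ with a translate of the canonical soliton. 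This wave-counting argument, and the preliminary lemma that a wave from $z_0$ causes no topplings on the far side of the strip, are the missing ideas in your proposal.
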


Such periodic patterns for $n=2$ in sandpiles were studied under the name ``$(p,q)$-webs'' by S. Caracciolo, G. Paoletti, and A. Sportiello in their work~\cite{firstsand}, see also Section 4.3 of~\cite{CPS} and Figure 3.1 in~\cite{book}, Figure 9a in~\cite{sadhu2011effect}. Experiments reveal that these patterns appear in many sandpile pictures and are self-reproducing under the action of waves. That is why we call these patterns {\it solitons}. 
\begin{center} 
\begin{figure}[tph]
\includegraphics[width=0.4\textwidth]{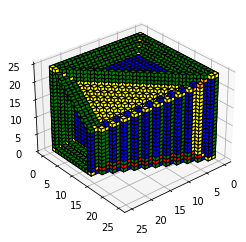}
\includegraphics[width=0.4\textwidth]{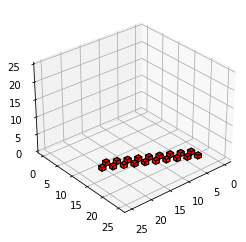}
\includegraphics[width=0.4\textwidth]{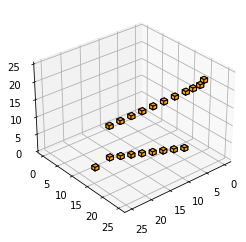}
\includegraphics[width=0.4\textwidth]{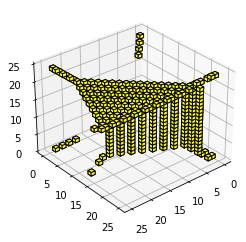}
\includegraphics[width=0.4\textwidth]{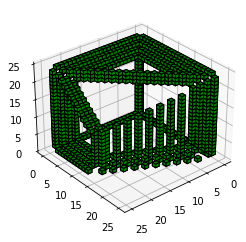}
\includegraphics[width=0.4\textwidth]{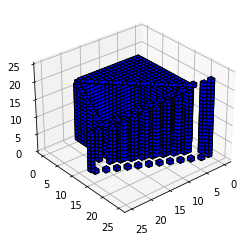}
\caption{We consider a sandpile in $\ZZ^3$ on a graph defined by $x+y+z\leq 50, x+2y\leq 50, 0\leq x,y,z,\leq 25$ where we start with $5$ grains everywhere and added one grain to the point $(4,5,6)$. On the first picture the final result of a relaxation is presented. Red cells represent vertices with zero grains, orange -- with one grain, yellow -- two grains, green -- three grains, blue -- four grains, and cells with five grains are transparent. Other pictures show each color separately. Note that the visible part of the picture (cells with less than five grains) is composed out of planar parts (we will call them solitons) and triples of solitons  intersecting by ``edges'' (the most visible edge is that containing cells with two grains, i.e. these colored in orange).}
\label{fig_soliton}
\end{figure}
\end{center} 

The fact that the solitons appear as ``cutting the corners'' of piece-wise linear functions was predicted by T. Sadhu and D. Dhar in~\cite{sadhu2012pattern}. We introduce a suitable definition of a {\it husking} procedure (Definition~\ref{def_thetan}). This paper is a generalisation of \cite{us_solitons} where similar results were obtained for $n=2$. The study of sandpiles in dimension more than two often lacks generalisations of results known in two dimensions. For example, there is no classification of patterns via quadratic forms similar to \cite{pegden2017stability,levine2013apollonian} in dimensions more than two. 

An example of a soliton for $n=3$ can be found in Figure~\ref{fig_soliton}: solitons represent planar ``faces'' in the picture. The soliton of direction $(0,0,1)$, i.e. that parallel to $xy$ coordinate plane (can be visible on the top part of the picture), has width one and composed of cells with four grains. The soliton of direction $(1,2,0)$ (the right part of the picture) is composed of cells with four, three, and two grains. The soliton of direction $(1,1,1)$ (the center of the picture) is composed of cells with four and two grains.


The second aim of this paper is to describe interactions of solitons in higher dimensions. Consider a lattice polytope $A\subset \RR^n$ without lattice points except its vertices. We prove that for each function $$\Psi:\mathbb Z^n\to \mathbb Z, z\to \min_{p\in A\cap \ZZ^n}(p\cdot z+c_p), c_p\in \ZZ$$ there exists a pointwise minimal function $\psi$ among all integer-valued superharmonic functions equal to $\Psi$ ``at infinity''.
Solitons correspond to the case when $A$ is an interval with lattice endpoints and without lattice points inside.
 The laplacian of $\psi$ corresponds to what we observe when solitons, corresponding to the edges of $A$, intersect. For example, in Figure~\ref{fig_soliton} we see intersections of three planar parts (solitons), i.e. the ``edge'' where we have a repeating pattern which contains cells with two grains (orange) among other cells. The set of orange cells is easy to distinguish, and it lives along a one dimensional ``edge''. 

The plan of this paper is as follows. In order to construct solitons and to study their interactions we define a ``husking'' operation diminishing an integer-valued discrete superharmonic function $\Psi$ while preserving its values ``at infinity'', i.e. we construct a decreasing sequence of functions $$\Psi\to (\Psi)_1\to(\Psi)_2\to\dots,$$  where $\Psi=(\Psi)_k$ ``at inifinity'') and prove a number of properties of this procedure, the most remarkable of which is that husking preserves monotonicity. Then, using periodicity of the function $$\Psi: z\to \min (p\cdot z,0), p\in\ZZ^n$$ along $L=\{v\in\ZZ^n\vert v\cdot p=0\}$ we descend $\Psi$ on $\ZZ^n/L$. Then  we show that if $(\Psi)_k\ne(\Psi)_{k+1}$ for a big $k$, then $(\Psi)_k$ is harmonic on a large part of $\ZZ^n/L$, and it should be linear on that part due an upper bound by a linear function. But then such a linearity would contradict the monotonicity property. Thus there exists $N$ such that $(\Psi)_k=(\Psi)_N$ for all $k>N$. In this case we say that the sequence of huskings of $\Psi$ stabilises.

Stabilisation of huskings is then used to prove Theorem~\ref{th_main}, via the Least Action principle for the toppling functions for waves. Namely, $2n-1+\Delta(\Psi)_N$ is the unique soliton in direction $p$.

Then we consider the case when $A\subset \RR^n$ is a two-dimensional polygon and later study three dimensional $A$, while proving several rather technical lemmata. Finally the proof for a three dimensional $A$ does not differ very much from the general case which we consider in the last chapter.

Let $\Psi(z)=\min_{p\in A}(p\cdot z+ c_p), c_p\in\ZZ$. If the intersection of the convex hull of $A$ in $\RR^n$ with $\ZZ^n$ consist only of the vertices of this polytope then the sequence of huskings of $\Psi$ also stabilises.

This results is proven for $n=2$ in \cite{us_solitons}. The main difficulty in generalising our proofs from \cite{us_solitons} to higher dimensions can be seen in three dimensions. While for solitons the proof is essentially the same, the case when the linear span of $A$ is two dimensional is analogous to the case of triads in \cite{us_solitons}, the case when the dimension of the linear span of $A$ has dimension three is substantially different.  When we consider a function $\Psi:\ZZ^3\to\ZZ$ as above, it is not true that $\Psi-(\Psi)_1$ has a finite support,  quite the contrary, $\Psi-(\Psi)_1$ is not zero near all points where $\Delta(\Psi)$ is not zero (and this set is the corner locus of $\Psi$ seeing as a function $\RR^3\to\RR$, the simplest example is $\Psi(x,y,z) = \min (x,y,z,0)$). So, instead of $\Psi$ we consider a partial husking $\tilde\Psi$ of it, constructed using the husking of functions corresponding to the faces of the convex hull of $A$. Them we need to prove that the support of $$\tilde\Psi-(\tilde\Psi)_k$$ grows at most linearly in $k$. The proof of this fact requires more ideas than we used in the two-dimensional setting.

The main motivation for this work is my desire to generalise the results of \cite{us} to higher dimensions. Namely, if we consider a large lattice polytope in $\ZZ^n$, put $2n-1$ grain to every lattice point, add grains to the points $p_1,\dots,p_l$, and relax this state, then the set of points with less than $2n-1$ grains in the final state $\phi$ is very close to certain tropical hypersurface passing through $p_1,\dots p_l$. The solitons represent hyperplane-like pictures of $\phi$, while the stabilised huskings for generic $A$ represent $\phi$ near vertices of the corresponding tropical hypersurface. The corresponding work on the tropical side is written in \cite{kalinin2021shrinking}.

The authors thank Mikhail Shkolnikov for discussions and an anonymous referee for questions and suggestions.

\section{Husking of integer valued superharmonic functions}
\label{sec_husk}
The discrete Laplacian $\Delta$ of a function $F:\ZZ^n\to\RR$ is defined as $$\Delta F(z) = -2nF(z)+\sum_{z'\sim z}F(z').$$ A function $F$ is called {\it harmonic} (resp., {\it superharmonic}) on $A\subset \ZZ^n$ if $\Delta F(z)=0$ (resp., $\Delta F(z)\leq 0$) for each $z\in A$.

\begin{remark}
Note that making a toppling at $z_0\in\ZZ^n$ in a state $\phi$ produces a state $\phi' = \phi+\Delta 1_{z_0}$ where $1_{z_0}(z)$ is equal to one if $z_0=z$ and is equal to zero otherwise. In general, if $H:\ZZ^n\to \ZZ_{\geq 0}$ is the toppling function of a relaxation of $\phi$ then $\phi^\circ = \phi+\Delta H$.
\end{remark}

\begin{lemma}[Least Action principle for waves, \cite{FLP, us_solitons}]
Let $\phi$ be a stable state on $\ZZ^n$ and $H^{z_0}$ be the toppling function of the relaxation caused by sending a wave from $z_0\in\ZZ^n$. Then $H^{z_0}$ is the pointwise minimal function among the functions $H$ such that $H\geq 0$, $\phi+\Delta H\leq 2n-1$ and $H(z_0)=1$.
\end{lemma}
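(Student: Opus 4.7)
The plan is to reduce the statement to the classical Least Action Principle for ordinary (legal) relaxations of integer-valued sandpile states, by separating the single forced toppling at $z_0$ from the subsequent legal relaxation that together constitute the wave. Concretely, I would set $\phi':=\phi+\Delta 1_{z_0}$, i.e. the state immediately after the initial forced toppling, and let $H'$ denote the toppling function of the ordinary relaxation of $\phi'$. The observation following Definition~\ref{def_waves} -- that during a wave each vertex topples at most once -- guarantees in particular that $z_0$ does not re-topple during the relaxation of $\phi'$, so the decomposition is clean: $H^{z_0}=1_{z_0}+H'$, and $H^{z_0}(z_0)=1$.

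Next I would verify that $H^{z_0}$ is admissible for the stated optimisation problem: non-negativity is clear, $\phi+\Delta H^{z_0}=W^{z_0}\phi\le 2n-1$ since a wave ends in a stable state, and $H^{z_0}(z_0)=1$ was just noted. For minimality, given any competitor $H\ge 0$ with $\phi+\Delta H\le 2n-1$ and $H(z_0)=1$, set $\tilde H:=H-1_{z_0}$; then $\tilde H\ge 0$ and $\phi'+\Delta\tilde H=\phi+\Delta H\le 2n-1$. Thus $\tilde H$ is admissible for the classical Least Action Principle applied to the state $\phi'$, which yields $\tilde H\ge H'$ and hence $H\ge 1_{z_0}+H'=H^{z_0}$, as required.

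The only delicate point is that the initiating toppling at $z_0$ is itself illegal (since $\phi$ is stable, $\phi(z_0)\le 2n-1<2n$), which is exactly why one cannot apply the ordinary Least Action Principle directly to $\phi$; shifting the base state to $\phi'$ is precisely the device that absorbs this issue. The classical Least Action Principle for $\phi'$ is invoked from the cited references, but if one wishes to be self-contained it follows from the standard first-offence argument: in any legal toppling sequence for $\phi'$, consider the first step at which the running topple counter at some vertex $v^*$ would strictly exceed $\tilde H(v^*)$; just before this step every counter $h_w$ satisfies $h_w\le \tilde H(w)$, and a direct computation gives that the chip count at $v^*$ equals $\phi'(v^*)-2n\tilde H(v^*)+\sum_{w\sim v^*}h_w\le \phi'(v^*)+\Delta\tilde H(v^*)\le 2n-1$, contradicting legality of the impending toppling at $v^*$. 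I do not foresee any genuinely hard step.
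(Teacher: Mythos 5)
Your proposal is correct. The paper itself gives no proof of this lemma (it is stated with citations to \cite{FLP, us_solitons}), and your argument --- splitting off the forced toppling at $z_0$, passing to $\phi'=\phi+\Delta 1_{z_0}$, and invoking the classical Least Action Principle for the legal relaxation of $\phi'$ via the first-offence argument --- is exactly the standard proof found in those references, so there is nothing to add.
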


\begin{lemma}
\label{lem_minharmonic}
If $F,G$ are two superharmonic functions on $A\subset\ZZ^n$, then $\min(F,G)$ is a superharmonic function on $A$.
\end{lemma}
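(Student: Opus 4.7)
The plan is to verify the superharmonicity inequality pointwise. Fix an arbitrary $z \in A$. By swapping the roles of $F$ and $G$ if necessary, I may assume $\min(F,G)(z) = F(z)$; the other case is identical by symmetry.

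The key observation is that the discrete Laplacian has a particularly favorable structure: the diagonal coefficient $-2n$ is attached to $F(z)$ itself, while all off-diagonal coefficients $+1$ are attached to neighbor values $F(z')$, which can only be decreased when we pass to $\min(F,G)(z')$. Concretely, I would chain two inequalities. First, the superharmonicity of $F$ at $z$ gives
\[
F(z) \;\geq\; \frac{1}{2n}\sum_{z' \sim z} F(z').
\]
Second, for every neighbor $z'$ the trivial inequality $F(z') \geq \min(F,G)(z')$ holds. Substituting this into the right-hand side only decreases it, so
\[
\min(F,G)(z) \;=\; F(z) \;\geq\; \frac{1}{2n}\sum_{z' \sim z} \min(F,G)(z'),
\]
which is exactly $\Delta \min(F,G)(z) \leq 0$. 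Since $z \in A$ was arbitrary, $\min(F,G)$ is superharmonic on $A$.

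There is no serious obstacle: the argument is a one-line consequence of the definition plus the monotonicity of the off-diagonal part of $\Delta$ in the neighbor values. The only thing worth flagging is that this argument would \emph{fail} for subharmonic functions (where one would need a maximum rather than a minimum), because replacing $F(z')$ by a larger quantity on the right-hand side would go the wrong way; the asymmetry comes precisely from the sign of the off-diagonal Laplacian entries.
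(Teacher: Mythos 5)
Your proof is correct and is essentially the paper's argument: the paper's one-line chain $\Delta\min(F,G)(z)\leq \Delta F(z)\leq 0$ (after assuming WLOG $F(z)\leq G(z)$) is exactly the two inequalities you spell out, namely superharmonicity of $F$ at $z$ plus $\min(F,G)(z')\leq F(z')$ at each neighbor. Nothing is missing.
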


\begin{proof}
Let $z\in A$. Without loss of generality, $F(z)\leq G(z)$. Then, $\Delta \min(F,G)(z)\leq \Delta F(z)\leq 0.$
\end{proof}

\begin{definition} 
For a function $F$,  the {\it deviation set} $D(F)$  is the set of points where $F$ is not harmonic, i.e. 
$$D(F) =\{z\vert  \Delta F(z)\ne 0\}.$$
\end{definition}

For $A\subset \ZZ^n,C>0$, we denote by $B_C(A)\subset \ZZ^n$ the set of points whose Euclidean distance to $A$ is at most $C$.

\begin{definition}
\label{def_thetan}
For $k\in\mathbb{N}$ and a superharmonic function $F:\ZZ^n\to\ZZ$ we define 
$$\Theta_k(F)=\{G:\ZZ^n\to \ZZ\vert  \Delta G \leq 0, F-k\leq G\leq F, \exists C>0, \{F\ne G\}\subset B_{C}(D(F))\}.$$

In plain words, $\Theta_k(F)$ is the set of all integer-valued superharmonic functions $G\leq F$, coinciding with $F$ outside a finite neighborhood of $D(F)$, whose difference with $F$ is at most $k$. 
 Define $(F)_k:\ZZ^n\to\ZZ$ to be the following function $$(F)_k(z)=\min\{G(z)\vert {G\in\Theta_n(F)}\}.$$ We call $(F)_k$ {\it the $k$-husking of $F$}. Note that $(F)_k\geq F-k$. A priori $(F)_k$ does not belong to $\Theta_k(F)$. 
\end{definition}

We now call ``husking'' the process that we used to call ``smoothing'' in \cite{us_solitons} because of confusion: people expect that the result of smoothing is a smooth function, which is not the case in our context, all our functions are from $\ZZ^n$ to $\ZZ$.

\begin{example}
One can easily check that if all the coordinates of $p\in\ZZ^n$ are $0,\pm1$, then $\Psi=(\Psi)_1$ and therefore husking procedure stabilises on the first step.
\end{example}

\begin{lemma}
\label{lemma_path}
Let $F:\ZZ^n\to\ZZ, z_0\in \ZZ^n, F(z_0)\leq k$ and the Euclidean distance between $z_0$ and the set $$\{\Delta F>0\}=\{z\vert  (\Delta F)(z)>0\}$$ be at least $k+2$. Let $z_1\sim z_0$ and $F(z_1)<F(z_0)$. Then there exists a point $z'\in \ZZ^n$ such that $F(z')<0$.
\end{lemma}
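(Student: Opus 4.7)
The plan is to iteratively construct a path of lattice points $z_0, z_1, z_2, \dots$ along which $F$ is strictly decreasing, and to show that such a path cannot last more than $k+1$ steps without $F$ going negative. The starting segment $z_0, z_1$ is given by hypothesis.

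Here is how I would run the induction. Suppose I have already built $z_0, z_1, \dots, z_j$ with $z_i \sim z_{i-1}$ and $F(z_0) > F(z_1) > \dots > F(z_j)$. Since $F$ is integer-valued, this forces $F(z_j) \leq F(z_0) - j \leq k - j$. The Euclidean distance from $z_j$ to $z_0$ is at most $j$, so as long as $j \leq k+1$, the point $z_j$ has distance at least $k+2 - j \geq 1$ from $\{\Delta F > 0\}$, and hence $\Delta F(z_j) \leq 0$. Unwinding the definition, this means
\[
\sum_{z' \sim z_j} F(z') \leq 2n\, F(z_j).
\]
Because $z_{j-1}$ is a neighbor of $z_j$ with $F(z_{j-1}) > F(z_j)$, not every neighbor can satisfy $F(z') \geq F(z_j)$; some neighbor $z_{j+1}$ must have $F(z_{j+1}) < F(z_j)$, which I take as the next step of the path.

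This lets me extend the path freely as long as $j \leq k+1$. At $j = k+1$ the bound $F(z_{k+1}) \leq k - (k+1) = -1 < 0$ kicks in, so $z' = z_{k+1}$ does the job. (If some earlier $z_j$ already has $F(z_j) < 0$, stop there.) The one point that needs a sentence of justification is that superharmonicity is valid at every point we visit; this is exactly the role of the distance assumption on $z_0$, combined with the trivial fact that a path of length $j$ from $z_0$ ends within Euclidean distance $j$ of $z_0$.

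I do not anticipate a real obstacle here; the lemma is essentially a discrete maximum-principle/gradient-descent argument. The only thing to be careful about is keeping the bookkeeping straight between the quantities "Euclidean distance from the current point of the path to $z_0$" and "distance from $z_0$ to the bad set $\{\Delta F > 0\}$", to make sure the superharmonicity step is legal at each of the $k+1$ iterations.
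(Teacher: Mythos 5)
Your proposal is correct and is essentially the same argument as the paper's: repeatedly apply superharmonicity at the current endpoint (legal because the path stays within Euclidean distance $k+1$ of $z_0$, hence off $\{\Delta F>0\}$) to find a strictly smaller neighbor, and stop at $z_{k+1}$ where $F(z_{k+1})\leq F(z_0)-(k+1)\leq -1$. Your write-up just makes the bookkeeping slightly more explicit than the paper does.
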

\begin{proof}
Indeed, $(\Delta F)(z_1)\leq 0$ and $F(z_0)> F(z_1)$ imply that for some neighbor $z_2$ of $z_1$ we have $F(z_2)<F(z_1)$. Then we repeat this argument for $z_2$ and find its neighbor $z_3$ with $F(z_3)< F(z_2)$, etc. Note that all $z_0,\dots,z_{k+1}$ do not belong to the set $\{\Delta F>0\}$. Finally, we set $z'=z_{k+1}$, since $F(z_{k+1})\leq F(z_0)-(k+1)\leq -1$. 
\end{proof}

\begin{lemma}
\label{lemma_path2}
Let $F:\ZZ^n\to \ZZ$, $z_0\sim z_1\sim \dots\sim z_k$ be a path in $\ZZ^n$ and $F$ be harmonic at all $z_i,0\leq i
\leq k-1$ and $\Delta F(z_k)<0$. Then there exists $i\geq 0$ such that $F(z_0)=F(z_i)$ and $z_i$ has a neighbor $z'$ such that $F(z_i)> F(z')$.
\end{lemma}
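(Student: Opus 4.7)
The plan is to walk along the path and let the index $I$ be the largest integer in $\{0,1,\dots,k\}$ such that $F(z_j)=F(z_0)$ for all $j\le I$; note $I\ge 0$ is well-defined. I will then split into two cases according to whether $I=k$ or $I<k$, and in each case produce an appropriate neighbor with strictly smaller $F$-value.

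If $I<k$, then $F(z_{I+1})\ne F(z_I)$ by maximality of $I$. Since $I\le k-1$, $F$ is harmonic at $z_I$, so the sum of $F$ over the $2n$ neighbors of $z_I$ equals $2nF(z_I)$. Because at least one neighbor, namely $z_{I+1}$, has value different from $F(z_I)$, the harmonicity condition forces at least one neighbor $z'$ of $z_I$ to satisfy $F(z')<F(z_I)=F(z_0)$; this neighbor is $z_{I+1}$ itself if $F(z_{I+1})<F(z_I)$, and otherwise it is some other neighbor whose existence is guaranteed because the average equals $F(z_I)$ and some neighbor exceeds it. Setting $i=I$ finishes this case.

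If $I=k$, then $F(z_k)=F(z_0)$. The hypothesis $\Delta F(z_k)<0$ means that the sum of $F$-values over neighbors of $z_k$ is strictly less than $2nF(z_k)$, so at least one neighbor $z'$ of $z_k$ satisfies $F(z')<F(z_k)=F(z_0)$. Setting $i=k$ finishes this case.

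There is no real obstacle here; the argument is a direct combination of the mean-value interpretation of harmonicity (a harmonic point with a higher-valued neighbor must also have a lower-valued neighbor) and the strict inequality in superharmonicity at $z_k$ (which by itself produces a lower-valued neighbor). The only mild subtlety worth noting in the writeup is the sub-case within $I<k$ where $F(z_{I+1})>F(z_I)$, so that the desired neighbor $z'$ is not $z_{I+1}$ but some other neighbor of $z_I$ supplied by the averaging identity.
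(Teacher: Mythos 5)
Your proof is correct and follows essentially the same route as the paper: split on whether the value $F(z_0)$ persists all the way to $z_k$ (where $\Delta F(z_k)<0$ supplies the smaller neighbor) or changes at some first index (where harmonicity at that vertex, combined with a neighbor of different value, forces a strictly smaller neighbor). Your explicit handling of the sub-case $F(z_{I+1})>F(z_I)$ is a point the paper leaves implicit, but the argument is the same.
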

\begin{proof}
If $F(z_0)=F(z_k)$ then we may choose $i=k$ and such a neighbor $z'\sim z_k$ exists since $(\Delta F)(z_k)<0$. If not, choose the first $i$ such that $F(z_0)=F(z_i)\ne F(z_{i+1})$ and then use the harmonicity of $F$ at $z_i$. 
\end{proof}

\begin{lemma}
\label{lem_notevident} If two superharmonic functions satisfy $F'\geq F$, then $(F')_k\geq (F)_k$ for each $k\in\ZZ_{\geq 0}$. 
\end{lemma}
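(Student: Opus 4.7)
The plan is a pointwise comparison. Fix $z_0 \in \ZZ^n$. Since $G(z_0)$ for $G \in \Theta_k(F')$ takes integer values in the finite window $[F'(z_0)-k,\,F'(z_0)]$, the infimum defining $(F')_k(z_0)$ is attained at some $G_\star \in \Theta_k(F')$. The natural candidate competitor for $\Theta_k(F)$ is
$$\tilde G := \min(G_\star,\,F).$$
By Lemma~\ref{lem_minharmonic}, $\tilde G$ is superharmonic; it is $\ZZ$-valued; $\tilde G \leq F$ by construction; and $\tilde G \geq F-k$ follows from $G_\star \geq F'-k \geq F-k$ (the only place $F' \geq F$ enters) together with $F \geq F-k$.

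If $\tilde G \in \Theta_k(F)$, the chain
$$(F)_k(z_0)\;\leq\;\tilde G(z_0)\;\leq\;G_\star(z_0)\;=\;(F')_k(z_0)$$
closes the argument, as $z_0$ is arbitrary. It remains to verify the support condition $\{\tilde G \neq F\} \subset B_C(D(F))$. Note $\{\tilde G \neq F\} = \{G_\star < F\}$, and since $G_\star < F \leq F'$ on this set, one immediately gets
$$\{\tilde G \neq F\}\;\subset\;\{G_\star \neq F'\}\;\subset\;B_{C'}(D(F'))$$
from the assumption $G_\star \in \Theta_k(F')$.

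The one genuine obstacle is converting a neighborhood of $D(F')$ into a neighborhood of $D(F)$. I would attack it with a descent along neighbors, modeled on Lemmas~\ref{lemma_path} and~\ref{lemma_path2}: at any $z_0 \in \{\tilde G < F\}$ whose Euclidean distance to $D(F)$ exceeds $k+2$, the function $F$ is harmonic on a ball around $z_0$, so the nonnegative integer function $F - \tilde G$ is subharmonic there, strictly positive at $z_0$, and bounded above by $k$. Iteratively selecting neighbors that keep $F - \tilde G \geq 1$ must eventually escape the tube $B_{C'}(D(F'))$ — beyond which $G_\star = F' \geq F$ forces $F - \tilde G = 0$, producing the desired contradiction and hence a uniform bound on $\dist(z_0, D(F))$. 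Making this walk rigorous, in particular extracting a bound depending only on $k$, $C'$, and the geometry of the two deviation sets, is the step where all the real work lies; the rest of the lemma is formal.
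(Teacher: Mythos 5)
Your reduction is sound and is essentially the paper's: your competitor $\tilde G=\min(G_\star,F)$ is exactly the paper's $\min(G,G')$ specialized to $G=F$, and everything up to the support condition is indeed formal. The genuine gap is exactly where you locate it, and the walk you propose does not close it. Your function $H=F-\tilde G=\max(0,F-G_\star)$ is nonnegative, bounded by $k$, and subharmonic off $D(F)$, so from a point with $H\geq 1$ you can always pass to a neighbor where $H$ is \emph{at least as large} --- but never necessarily strictly larger. Such a non-decreasing walk has no reason to ``escape the tube $B_{C'}(D(F'))$'': that tube is in general unbounded (for the functions of this paper $D(F')$ is an infinite hyperplane-like set), and a bounded nonnegative subharmonic function can sit at the constant value $1$ along an infinite path running parallel to $D(F')$ while staying far from $D(F)$. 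Without a point of \emph{strict} increase the bound $H\leq k$ gives no termination, and nothing in your setup produces such a point near an arbitrary $z_0\in\{H\geq 1\}$. (The dual of Lemma~\ref{lemma_path} does say that any point where $H$ strictly increases to a neighbor must lie within roughly $k$ of $D(F)$; the missing ingredient is a seed for that increase.)

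The paper's fix is to run the argument on $F'-F$ instead of $F-\tilde G$. This function is nonnegative and superharmonic off $D(F)$ (there $\Delta(F'-F)=\Delta F'\leq 0$), and it has a built-in seed: at any $w\in D(F')\setminus D(F)$ one has $\Delta(F'-F)(w)=\Delta F'(w)<0$. Since every point of $\{G_\star\neq F'\}$ admits a path of \emph{bounded} length $C'$ to $D(F')$, Lemma~\ref{lemma_path2} applied along that short path yields a point $z_i$ with $(F'-F)(z_i)=(F'-F)(z_0)$ and a neighbor where $F'-F$ strictly drops; note that on $\{G_\star<F\}$ one automatically has $F'-F\leq (F'-G_\star)+(G_\star-F)\leq k-1<k$, so Lemma~\ref{lemma_path} then forces $z_i$ to lie within $k+2$ of $\{\Delta(F'-F)>0\}\subset D(F)$, since otherwise $F'-F$ would become negative. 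Hence $\dist(z_0,D(F))\leq C'+k+2$. Your $H$ has neither feature: near $D(F')$ one typically has $\tilde G=F$ locally, so $\Delta H=0$ and there is no seed, and the target set of your walk is the unbounded complement of a tube rather than the nearby set $D(F')$. So the statement you isolate is true, but to prove it you should replace $F-\tilde G$ by $F'-F$ and walk the short path toward $D(F')$ rather than trying to escape its neighborhood.
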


\begin{proof}
 Pick any $G'\in \Theta_k(F'),G\in \Theta_k(F)$. In spite of notation we write $\{F>G\}$ instead of $\{z\in\ZZ^n| F(z)>G(z)\}$. We have $G'\geq G$ on $$\{F'-F\geq k\}\cup \{(F')_k=F'\}.$$
Let $\{G'\ne F'\}$ belong to $B_{C_0}(D(F'))$. 

Thus it is enough to prove that $G'\geq (F)_k$ on the set $$A_1=\{F'-F< k\}\cap B_{C_0}(D(F')).$$

Consider the set $$A_2=\{F'-F< k\} \cap \{z\vert \exists z'\sim z, (F'-F)(z)> (F'-F)(z') \}$$

Note that $F'-F$ is a superharmonic function outside of $D(F)$. Since $F'-F\geq 0$ it follows from Lemma~\ref{lemma_path} applied to $F'-F$ that $A_2$ belongs to the $(k+1)$-neighborhood of $D(F)$. 

Next we prove that $A_1\subset B_{C_0}(A_2\cup D(F))$. Indeed, for each point $z_0$ in $A_1$ there exists a path of length at most $C_0$ to the set $D(F')$. If this path intersects $D(F)$, we are done. If not, then Lemma~\ref{lemma_path2} asserts that for a $z_i$ on this path for a certain $z'\sim z_i$, we have $$k>(F'-F)(z_0)=(F'-F)(z_i)>(F'-F)(z')$$ and thus $z_i\in A_2$ and we proved that $A_1\subset B_{C_0}(A_2\cup D(F))$.

Summarising, we obtained that for each $G'\in \Theta_k(F'),G\in \Theta_k(F)$ we have $G'\geq G$ outside $B_{C_0}(A_2\cup D(F))\subset B_{C_0+k+1}(D(F))$. Thus, $\min (G,G')$ belongs to $\Theta_k(F)$, because it coincides with $G$ outside a finite neighborhood of $D(F)$, it is superharmonic, and since $F'-G'\geq k, F-G\geq k, F'\geq F$ we have that $F-\min (G,G')\geq k$. Thus $G'\geq (F)_k$ and so $(F')_k\geq (F)_k$.
\end{proof}

\begin{lemma}
\label{lemma_stabi}
Suppose $\Psi:\ZZ^n\to \ZZ$ and $\tilde\Psi$ belongs to $\Theta_k(\Psi)$ for some $k$. Then the sequence $(\Psi)_k$ stabilises if and only if the sequence $(\tilde\Psi)_k$ stabilises as $k\to \infty$.
\end{lemma}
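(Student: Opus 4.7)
The plan is to prove the two-sided comparison
\[
(\Psi)_m \;\geq\; (\tilde\Psi)_m \;\geq\; (\Psi)_{m+k_0}
\]
for every $m\geq 0$, where I write $k_0$ for the fixed constant with $\tilde\Psi\in\Theta_{k_0}(\Psi)$ and $m$ for the running husking index. Once this sandwich is in place, both directions of the equivalence will fall out by a pigeonhole. As a bonus, the stabilised values of the two sequences will coincide.

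The left inequality is immediate from Lemma~\ref{lem_notevident} applied to $\Psi\geq\tilde\Psi$ (both are superharmonic integer-valued). For the right inequality I would prove the set-inclusion $\Theta_m(\tilde\Psi)\subseteq\Theta_{m+k_0}(\Psi)$. Given $G\in\Theta_m(\tilde\Psi)$, integer-valuedness and superharmonicity pass through, and combining $\tilde\Psi-m\leq G\leq\tilde\Psi$ with $\Psi-k_0\leq\tilde\Psi\leq\Psi$ gives $\Psi-(m+k_0)\leq G\leq\Psi$. The one point that needs a little thought---and I expect this to be the main delicacy of the proof---is the support condition. The key observation is that if $C$ is a constant with $\tilde\Psi=\Psi$ off $B_C(D(\Psi))$, and $z\notin B_{C+1}(D(\Psi))$, then $z$ and all its neighbors lie off $B_C(D(\Psi))$, so $\Delta\tilde\Psi(z)=\Delta\Psi(z)=0$; hence $D(\tilde\Psi)\subseteq B_{C+1}(D(\Psi))$. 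Consequently
\[
\{G\neq\Psi\}\;\subseteq\;\{G\neq\tilde\Psi\}\cup\{\tilde\Psi\neq\Psi\}\;\subseteq\;B_{C_G}(D(\tilde\Psi))\cup B_C(D(\Psi))
\]
sits inside a finite neighborhood of $D(\Psi)$, which is exactly the support condition required for $G\in\Theta_{m+k_0}(\Psi)$.

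With the sandwich established, both implications are short. If $(\Psi)_m$ stabilises at $N$, then for $m\geq N$ both extremes of the sandwich equal $(\Psi)_N$, forcing $(\tilde\Psi)_m=(\Psi)_N$. Conversely, if $(\tilde\Psi)_m$ stabilises at $N'$, then for $m\geq N'+k_0$ the sandwich yields both $(\Psi)_m\geq(\tilde\Psi)_m=(\tilde\Psi)_{N'}$ and $(\Psi)_m=(\Psi)_{(m-k_0)+k_0}\leq(\tilde\Psi)_{m-k_0}=(\tilde\Psi)_{N'}$, pinning $(\Psi)_m=(\tilde\Psi)_{N'}$. Apart from the support-neighborhood bookkeeping above, the whole argument is a formal consequence of the monotonicity of husking (Lemma~\ref{lem_notevident}) together with the definitional inclusion $\Theta_m(\tilde\Psi)\subseteq\Theta_{m+k_0}(\Psi)$.
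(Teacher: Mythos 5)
Your argument is correct and takes essentially the same route as the paper's own proof: the two facts you use, namely $(\Psi)_m\geq(\tilde\Psi)_m$ from Lemma~\ref{lem_notevident} and the inclusion $\Theta_m(\tilde\Psi)\subseteq\Theta_{m+k_0}(\Psi)$, are exactly the two facts the paper invokes. Your extra bookkeeping on the support condition (via $D(\tilde\Psi)\subseteq B_{C+1}(D(\Psi))$) and the explicit two-sided sandwich yielding uniform stabilisation simply fill in details the paper leaves implicit.
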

\begin{proof}
Indeed, $(\Psi)_k\geq (\tilde\Psi)_k$, thus, if $(\tilde\Psi)_k$ stabilises then so does the sequence $(\Psi)_k$. On the other hand, $\Theta_n(\tilde\Psi)\subset \Theta_{n+k}(\Psi)$, thus if $(\Psi)_k$ stabilises, so does the sequence $(\tilde\Psi)_k$.
\end{proof}

\section{Stabilisation of huskings and its corollaries}

The following remark follows from the definition of husking.
\begin{remark}
\label{rem_linear}
Let $F:\ZZ^n\to\ZZ, q,c\in\ZZ^n$. Let $G(z)=F(z)-q\cdot z-c$. Then $(F)_n(z)-q\cdot z-c = (G)_n(z)$.
\end{remark}

\begin{theorem}\label{th_stabilfn}
Pick a primitive vector $p\in\ZZ^n$. Let
\begin{equation}\label{eq_P}
\Psi(z)=\min(0,p\cdot z), z\in\ZZ^n
\end{equation} The sequence of $k$-huskings $(\Psi)_k$ stabilises eventually as $k\to\infty$, i.e. there exists $N>0$ such that $(\Psi)_k\equiv (\Psi)_N$ for all $k>N$. Moreover, $(\Psi)_N$ coincides with $\Psi$ outside a finite neighborhood of $D(\Psi)$.
\end{theorem}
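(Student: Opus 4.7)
The plan is to use the $L$-periodicity of $\Psi$ to descend the husking procedure to a one-dimensional weighted problem, and then to argue by contradiction via monotonicity. Let $L = \{v \in \ZZ^n : p \cdot v = 0\}$; since $p$ is primitive, $L$ is a saturated sublattice of rank $n-1$, and the map $\pi : \ZZ^n \to \ZZ$, $z \mapsto p \cdot z$, identifies $\ZZ^n/L$ with $\ZZ$. Both $\Psi$ and $D(\Psi)$ are $L$-periodic, and the family $\Theta_k(\Psi)$ is invariant under $L$-translations, hence so is its pointwise minimum. Therefore $(\Psi)_k = g_k \circ \pi$ for some $g_k : \ZZ \to \ZZ$, and analogously $\Psi = g \circ \pi$ with $g(m) = \min(0, m)$. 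The Laplacian of any $L$-periodic function $F = h \circ \pi$ equals $(\Lambda h) \circ \pi$ with
\[
(\Lambda h)(m) = \sum_{i=1}^n \bigl[ h(m + p_i) + h(m - p_i) - 2 h(m) \bigr],
\]
so with $M = \max_i |p_i|$ one has $D(g) \subset [-M, M]$ and the whole problem reduces to its one-dimensional analogue for $\Lambda$ acting on integer-valued functions.

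Next I would transfer the shift inequalities $g(m-1) \leq g(m) \leq g(m-1) + 1$ to every husking. Translation invariance of the defining conditions of $\Theta_k$ implies that husking commutes with integer translations, so $(g(\cdot - 1))_k = g_k(\cdot - 1)$; combining this with Lemma~\ref{lem_notevident} and Remark~\ref{rem_linear} (to commute the constant $+1$) yields $g_k(m-1) \leq g_k(m) \leq g_k(m-1) + 1$ for every $k$. Hence each $g_k$ is non-decreasing with increments in $\{0, 1\}$, and, together with $g_k \leq g$ and the bounded-support condition, this forces $g_k$ to be an integer "staircase" equal to $m$ on $(-\infty, -C_k]$ and to $0$ on $[C_k, \infty)$ for some $C_k$.

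Now assume, for contradiction, that $(g)_k \neq (g)_{k+1}$ for infinitely many $k$. Then the width of the region $S_k := \{m : g_k(m) < g(m)\}$ tends to infinity. The main claim is that on a sub-interval $I_k \subset S_k$ of length tending to $\infty$, the function $g_k$ is in fact $\Lambda$-harmonic. Indeed, at any $m_0 \in S_k$ whose $M$-neighbourhood is contained in $S_k$ and disjoint from $[-M, M]$, if $\Lambda g_k(m_0) < 0$ then $g_k - \mathbf{1}_{m_0}$ still lies in $\Theta_k(g)$: $\Lambda$-superharmonicity at each neighbour $m_0 \pm p_i$ persists since $g_k$ was superharmonic there and was only modified at $m_0$; the constraint $g - k \leq g_k - \mathbf{1}_{m_0} \leq g$ is preserved thanks to $m_0 \in S_k$; and the bounded-support condition still holds. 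This contradicts the minimality of $g_k$. Picking $I_k \subset [M+1, \infty)$, where $g \equiv 0$, the function $-g_k$ is non-negative, $\Lambda$-subharmonic on $I_k$, has increments in $\{-1, 0\}$, and vanishes at the right boundary of $S_k$; a discrete maximum-principle argument for $\Lambda$ with the affine comparison $\phi \equiv 0$ forces $-g_k \equiv 0$ on $I_k$ once $|I_k|$ is large enough, contradicting $I_k \subset S_k$. The uniform bound on $C_N$ obtained from this argument gives the final claim that $(\Psi)_N$ coincides with $\Psi$ outside a finite neighbourhood of $D(\Psi)$.

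The main obstacle will be the harmonicity-propagation step: simultaneously verifying $\Lambda$-superharmonicity at all $2n$ neighbours $m_0 \pm p_i$ after the local unit decrease requires the $M$-buffer hypothesis around $m_0$, and one must be careful that the bounded-support condition defining $\Theta_k$ is preserved under the modification. The maximum-principle conclusion for $\Lambda$ is somewhat non-standard because of the mixed step sizes $p_1, \dots, p_n$, but after using the monotonicity constraint it reduces to the classical one-dimensional discrete Phragm\'en--Lindel\"of principle applied to the first differences $g_k(m+1) - g_k(m) \in \{0, 1\}$.
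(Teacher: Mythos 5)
Your reduction to the quotient multigraph $\ZZ^n/L\cong\ZZ$ and the transfer of the increment bounds $g_k(m)-g_k(m-1)\in\{0,1\}$ via Lemma~\ref{lem_notevident} and Remark~\ref{rem_linear} match the paper's setup. The genuine gap is in the harmonicity-propagation step. You claim that if $\Lambda g_k(m_0)<0$ at a point $m_0$ deep inside $S_k$, then $g_k-\mathbf{1}_{m_0}$ still lies in $\Theta_k(g)$, and you verify superharmonicity only at the neighbours $m_0\pm p_i$. But superharmonicity fails at $m_0$ itself: with $\Delta F(z)=-2nF(z)+\sum_{z'\sim z}F(z')$, subtracting $\mathbf{1}_{m_0}$ \emph{increases} the Laplacian at $m_0$ by the local degree ($2n$ generically), so the modified function is superharmonic at $m_0$ only if $\Lambda g_k(m_0)\leq -2n$, not merely $\Lambda g_k(m_0)<0$. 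This local perturbation therefore yields only the bound $\Lambda g_k\geq -2n+1$ (this is exactly the content of Remark~\ref{rem_theta}), never harmonicity. Indeed the underlying claim that minimality forces harmonicity in the interior of $S_k$ is false: the limiting husking $\psi$ is strictly superharmonic at many points of $\{\psi\neq\Psi\}$ --- that is precisely what makes the soliton $2n-1+\Delta\psi$ a nonconstant pattern. Without this step you never obtain a long harmonic subinterval, and the rest of the argument collapses.

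The paper obtains the harmonic subinterval by a global counting argument that your proposal is missing: by the discrete divergence identity (Lemma~\ref{lemma_nabla}), $\sum_{z\in\ZZ^n/L}\Delta(\Psi)_k(z)$ equals the same boundary constant $P$ for every $k$ (because $(\Psi)_k=\Psi$ far from $D(\Psi)$), and since $\Delta(\Psi)_k\leq 0$ the number of non-harmonic points in the cylinder is at most $|P|$, uniformly in $k$; pigeonhole then gives a harmonic stretch of length at least $|I_k|/|P|\to\infty$. Your endgame also needs repair: a nonnegative $\Lambda$-harmonic integer staircase on a long interval need not vanish, and a bare maximum principle will not force it to --- one must invoke Lemma~\ref{lemma_harmonic} (the Duffin estimate) to upgrade harmonicity plus integrality plus the linear growth bound to exact linearity $g_k(m)=cm+d$ with integer $c\in\{0,1\}$, and then exclude each slope using the existence of non-harmonic points on both sides of the harmonic interval together with the monotonicity you already established, as in the final paragraph of the paper's proof.
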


\begin{definition}
The pointwise minimal function in $\bigcup\Theta_k(\Psi)$, which exists by
Theorem~\ref{th_stabilfn}, is called {\it the canonical husking of $\Psi$} and is denoted by $\psi$.
\end{definition}
\begin{remark}
\label{rem_theta}Note that $\Delta \psi\geq -2n+1$ because otherwise we could decrease $\psi$ at a point violating this condition, preserving superharmonicity of $\psi$, and this would contradict to the pointwise minimality of $\psi$ in $\bigcup\Theta_k(\Psi)$.
\end{remark}

Let $q\in \ZZ^n$ be such that $p\cdot q=1$. Note that $\Psi(z+q) = \min (0, p\cdot z+1)$ and  $$\Psi(z-q) = \min (0, p\cdot z-1) = -1+ \min (1, p\cdot z).$$
 
Consider the sandpile state $\phi=2n-1 + \Delta \psi$. By Remark~\ref{rem_theta}, $\phi\geq 0$ and $\phi$ is a stable state because $\psi$ is superharmonic.  Let $z_0\in \ZZ^n$ be a point far from $D(\psi)$. The following corollary says that sending a wave from $z_0$ translates $\phi$ by the vector $\pm q$ depending on the side from where we send the wave. 
\begin{proposition}
\label{cor_wavegp}
In the above conditions, $$(W_{z_0}\phi)(z)=2n-1+\Delta \psi(z\pm q) = \phi(z\pm q)$$ where $W_{z_0}$ is the sending wave from $z_0$ (Definition~\ref{def_waves}) and we choose ``$+$'' if $\psi(z_0) < 0$ and ``$-$'' if $\psi(z_0) = 0$.
 
\end{proposition}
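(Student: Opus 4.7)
The proof rests on the Least Action principle for waves together with the pointwise minimality of $\psi$ in $\bigcup_k\Theta_k(\Psi)$. I carry out the ``$+$'' case ($\psi(z_0)<0$); the ``$-$'' case, with candidate toppling function $H_-(z):=\psi(z-q)-\psi(z)+1$, is entirely parallel and uses the inequality $\Psi\leq\Psi(\cdot-q)+1$ in place of $\Psi(\cdot+q)\geq\Psi$.

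Let $H_+(z):=\psi(z+q)-\psi(z)$. I first check that $H_+$ is admissible in the Least Action principle. Non-negativity $H_+\geq 0$ follows from Lemma~\ref{lem_notevident} applied to $\Psi(\cdot+q)\geq\Psi$, combined with the obvious translation invariance $(\Psi(\cdot+q))_k(z)=(\Psi)_k(z+q)$, giving $\psi(\cdot+q)\geq\psi$. Stability is immediate from $\phi+\Delta H_+=2n-1+\Delta\psi(\cdot+q)=\phi(\cdot+q)\leq 2n-1$. Since $z_0$ is far from $D(\psi)$ we have $\psi=\Psi$ near $z_0$, and $\psi(z_0)<0$ forces $p\cdot z_0\leq -1$, so $H_+(z_0)=\Psi(z_0+q)-\Psi(z_0)=1$. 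The Least Action principle then yields $H^{z_0}\leq H_+$.

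For the reverse inequality I use minimality. Set $g:=\psi+H^{z_0}$, which is integer-valued, superharmonic, and satisfies $\psi\leq g\leq\psi(\cdot+q)$ (the upper bound via the previous step). Its shift $F(z):=g(z-q)$ is again superharmonic and lies between $\psi(\cdot-q)$ and $\psi$. The plan is to verify $F\in\Theta_k(\Psi)$ for some $k$; the minimality of $\psi$ then forces $F\geq\psi$, hence $g(\cdot-q)=\psi$, i.e.\ $H^{z_0}=H_+$. The proposition follows from $W_{z_0}\phi=\phi+\Delta H_+=\phi(\cdot+q)$.

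The main obstacle is verifying the asymptotic clause $\{F\neq\Psi\}\subset B_C(D(\Psi))$. On the side $p\cdot z\geq 2$, the point $z-q$ lies on the upper side of $D(\psi)$ and $H^{z_0}(z-q)\leq H_+(z-q)=0$, so $F(z)=\psi(z-q)=\Psi(z-q)=0=\Psi(z)$. On the side $p\cdot z\leq -1$, I claim $H^{z_0}(z-q)=1$: every vertex $z'$ in the connected component of $z_0$ in $\{\phi=2n-1\}$ topples at least once during the wave, for otherwise the stability constraint $\phi(z')+\sum_{z''\sim z'}H^{z_0}(z'')\leq 2n-1$ would inductively force $H^{z_0}$ to vanish throughout that component, contradicting $H^{z_0}(z_0)=1$. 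Since far from $D(\psi)$ on the lower side $\phi=2n-1$, this gives $F(z)=\Psi(z-q)+1=(p\cdot z-1)+1=\Psi(z)$. Together with the trivial lower bound $F\geq\psi(\cdot-q)\geq\Psi-(N+1)$ (for any $N$ with $\psi\geq\Psi-N$), this confirms $F\in\Theta_{N+1}(\Psi)$ and completes the proof.
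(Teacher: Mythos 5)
Your proof is correct and follows essentially the same route as the paper: the Least Action principle gives $H^{z_0}\leq H_{\pm}$, and the pointwise minimality of $\psi$ in $\bigcup_k\Theta_k(\Psi)$ gives the reverse inequality. You in fact supply more detail than the paper's own terse argument — notably the verification that the shifted function $\psi(\cdot-q)+H^{z_0}(\cdot-q)$ belongs to some $\Theta_k(\Psi)$, via the observation that every vertex in the connected component of $z_0$ in $\{\phi=2n-1\}$ topples at least once — a step the paper leaves implicit.
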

\begin{proof} Let $H^{z_0}$ be the toppling function of the wave from $z_0$. Denote $h(z) =  \psi(z + q)-\psi$ if $\psi (z_0)<0$ and  $h(z) = 1+ \psi(z - q)-\psi$ if $\phi(z_0)=0$.
Since $$W_{z_0}\phi = \phi +\Delta H^{z_0} =2n-1+\Delta(\Psi+H^{z_0}),$$ it is enough to prove that  $H^{z_0}(z) = h(z)$.

 It follows from Lemma~\ref{lem_notevident} that $h(z)$ is non-negative.
 
 $2n-1+\Delta \phi(z\pm q)$ is a stable state. On the other hand, the function $\psi+ h$ coincides with $\phi(z\pm q)$ outside of a finite neighborhood of $D(\phi(z\pm q))$ and is superharmonic. Therefore, by the definition of $\phi(z\pm q)$,  we see that $h\geq  H^{z_0}$ and this finishes the proof.
\end{proof}

\section{Holeless functions}

We need the fact that the set $\{(F)_1\ne F\}$ belongs to a finite neighborhood of $D(F)$ (in particular, this fact implies a pleasurable property $((F)_k)_m = (F)_{m+k}$). Unfortunately, this fact is not true for all superharnomic functions $F$, so we need to restrict the set of functions $F$ that we consider. Namely, we ask for the following technical property prohibiting to have arbitrary large holes in the deviation set.
\begin{definition}
\label{def_holeless}
We say that a function $F:\ZZ^n\to \ZZ$ is {\it holeless} if there exists $C>0$ such that $B_C(D(F))$ contains all the connected components of $\ZZ^n\setminus D(F)$ which belong to some finite neighborhood of $D(F)$. When we want to specify the constant $C$ we write that $F$ is $C$-holeless.
\end{definition}

\begin{example}
\label{ex_small}
$\Psi$ is a holeless function just because $\ZZ^n\setminus D(\Psi)$ has no components which belong to a finite neighborhood of $D(F)$.
\end{example}

\begin{lemma}
\label{lem_finiteneigh}
If $F$ is $C$-holeless, then for each $G\in \Theta_k(F)$ the set $\{F\ne G\}$ is contained in $B_{\max(k,C)}(D(F))$.
\end{lemma}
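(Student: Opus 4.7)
The plan is to treat each $z_0\in\{F\ne G\}$ according to the connected component $U$ of $\ZZ^n\setminus D(F)$ containing it (if $z_0\in D(F)$ the conclusion is trivial). If $U$ is contained in some finite neighborhood of $D(F)$, the $C$-holeless hypothesis directly gives $U\subseteq B_C(D(F))$, so $z_0\in B_{\max(k,C)}(D(F))$; this bounded-component case requires no further work.

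The substantive case is $U$ unbounded, where one must show $\dist(z_0,D(F))\le k$. Set $H:=F-G$. Then $H$ is integer-valued with $0\le H\le k$, it is subharmonic on $\ZZ^n\setminus D(F)$ (since $F$ is harmonic there and $G$ is superharmonic everywhere, so $\Delta H=-\Delta G\ge 0$), and it vanishes outside some finite neighborhood of $D(F)$ by the definition of $\Theta_k(F)$. I would argue by contradiction: suppose $\dist(z_0,D(F))>k$ and apply Lemma~\ref{lemma_path} to the superharmonic function $k-H$, observing that $\{\Delta(k-H)>0\}=\{\Delta H<0\}\subseteq D(F)$ and taking the lemma-parameter to be $k-H(z_0)$. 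If some neighbor $z_1\sim z_0$ satisfies $H(z_1)>H(z_0)$, equivalently $(k-H)(z_1)<(k-H)(z_0)$, then Lemma~\ref{lemma_path} supplies $z'$ with $(k-H)(z')<0$, i.e.\ $H(z')>k$, violating $H\le k$. Otherwise no such neighbor exists and the subharmonicity inequality $\sum_{z_1\sim z_0}H(z_1)\ge 2nH(z_0)$, combined with $H(z_1)\le H(z_0)$ for every neighbor, forces $H\equiv H(z_0)\ge 1$ on the neighbors of $z_0$. Iterating this dichotomy outward through $U$ propagates the constancy $H\equiv H(z_0)$; since $U$ is unbounded while $H$ must vanish outside a finite neighborhood of $D(F)$, the propagation eventually reaches a point where $H=0$, the desired contradiction.

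The main obstacle is the distance bookkeeping in this propagation, since each application of Lemma~\ref{lemma_path} consumes one unit of the safety margin $\dist(\cdot,D(F))-k$. A clean way to avoid nested induction is to choose $z^*\in\{F\ne G\}\cap U$ maximizing the Euclidean distance to $D(F)$ (the maximum is attained because $\{F\ne G\}$ lies in some finite neighborhood of $D(F)$): if $\dist(z^*,D(F))>k$, the Lemma~\ref{lemma_path} argument applied at $z^*$ either immediately violates $H\le k$, or forces every neighbor of $z^*$ to lie in $\{F\ne G\}\cap U$, and unboundedness of $U$ then supplies a neighbor at strictly greater distance from $D(F)$ than $z^*$, contradicting maximality.
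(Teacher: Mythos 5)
Your reduction to the function $H=F-G$ (integer-valued, $0\le H\le k$, subharmonic off $D(F)$, vanishing outside a finite neighborhood of $D(F)$) and your two branches at a point $z_0$ with $\dist(z_0,D(F))>k$ are sound: branch (a) correctly turns a neighbor with $H(z_1)>H(z_0)$ into a violation of $H\le k$ via Lemma~\ref{lemma_path} applied to $k-H$, and branch (b) correctly forces $H$ to be constant equal to $H(z_0)\ge 1$ on all neighbors. The gap is in how you close branch (b). Your ``clean'' version asserts that, since $U$ is unbounded, the extremal point $z^*$ must have a lattice neighbor strictly farther from $D(F)$. This is false: the Euclidean distance to a set can have a strict local maximum at a lattice point even when the complementary component is unbounded (take $D(F)$ containing the $2n$ points $z^*\pm R e_i$; every neighbor of $z^*$ is then at distance $R-1<R$ from $D(F)$). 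Your iterative version has the same problem in a different guise: each application of the dichotomy at a new point $w$ needs $\dist(w,D(F))>k-H(w)+1$ to run branch (a), so the constancy only propagates through the connected component of $z^*$ inside $\{\dist(\cdot,D(F))>k\}$ (roughly), and that component can be contained in a finite neighborhood of $D(F)$ even when $U$ is not --- picture $D(F)$ as a thick enclosure pierced by a one-point-wide tunnel: $U$ is unbounded, but every escape route passes within distance $1$ of $D(F)$, so the propagation stalls before it ever reaches a point with $H=0$.

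Note how the paper's proof sidesteps exactly this: it propagates the \emph{top} level set $A_k=\{H=k\}$ first, where no analogue of branch (a) is needed because no neighbor can exceed $k$; hence a point of $A_k\setminus D(F)$ has \emph{all} its neighbors in $A_k$ unconditionally, the entire connected component of $\ZZ^n\setminus D(F)$ through such a point lies in $A_k\subseteq\{F\ne G\}$, and the $C$-holeless hypothesis then confines it to $B_C(D(F))$. The lower level sets $A_{k-1},A_{k-2},\dots$ are handled by descent: a point of $A_j\setminus D(F)$ either propagates its own level to all neighbors or is adjacent to a strictly higher level, and the higher-level neighbor is forced into $D(F)$ (or within a few steps of it), which is where the $k$ in $\max(k,C)$ comes from. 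To repair your argument you would need to replace the single extremal point by this top-down level-set propagation, or otherwise justify why the set on which $H$ is constant cannot be trapped between $D(F)$ and $B_k(D(F))$; unboundedness of $U$ alone does not do this.
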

\begin{proof}
Let $A_k=\{v\in\ZZ^n\vert  G(v)=F(v)-k\}$. If $v\in A_k\setminus D(F)$ then from the superharmonicity of $G$ and harmonicity of $F$ at $v$ we deduce that  all neighbors of $v$ belong to $A_k$. Therefore the connected component of $v\in A_k$ in $\ZZ^n\setminus D(F)$ belongs to $A_k$, which, in turn, belongs to a finite neighborhood of $D(F)$ because there belongs the set $\{F\ne G\}$. Thus $A_k$ belongs to $C$-neighborhood of $D(F)$.  
By the same arguments, for $A_{k-1} = \{G=F-k+1\}$, each point in $A_{k-1}\setminus D(F)$ is contained in the 1-neighborhood of $D(F)\cap A_k$ or, together with its connected component of $\ZZ^n\setminus D(F)$ belongs to $A_{k-1}$, i.e. is contained in $B_C(D(F))$. Then, $A_{k-2}\setminus D(F)$ is contained in the $2$-neighborhood of $D(F)\cap A_k$ or in $1$-neighborhood of $D(F)\cap A_{k-1}$, or in $B_C(D(F))$, etc. 
\end{proof}

\begin{corollary}
\label{cor_finiteness}
If $F$ is $C$-holeless for some $C>0$, then for each $k\geq 0$ the function $(F)_k$ belongs to $\Theta_k(F)$.
\end{corollary}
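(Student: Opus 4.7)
The plan is to verify each of the four defining conditions of $\Theta_k(F)$ in turn for the candidate $(F)_k$. First I would observe that $\Theta_k(F)$ is nonempty because $F$ itself trivially lies in it, and that every $G\in\Theta_k(F)$ is sandwiched between the integer functions $F-k$ and $F$. Hence $(F)_k(z)$ is a minimum over a nonempty set of integers bounded below, so it is integer-valued and the minimum is actually attained pointwise. The sandwich property also immediately yields $F-k\leq (F)_k\leq F$.

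Next I would establish superharmonicity by imitating Lemma~\ref{lem_minharmonic} one point at a time. Fix $z\in\ZZ^n$ and pick any $G_0\in\Theta_k(F)$ realising $G_0(z)=(F)_k(z)$; since $(F)_k\leq G_0$ everywhere, with equality at $z$, one gets
$$\Delta(F)_k(z)\leq \Delta G_0(z)\leq 0,$$
so $(F)_k$ is superharmonic on all of $\ZZ^n$.

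The delicate part, and the one I expect to be the main obstacle, is the condition that $\{F\ne(F)_k\}$ be contained in a \emph{single} finite neighborhood of $D(F)$. A priori the differ-from-$F$ sets of distinct $G\in\Theta_k(F)$ could sit inside arbitrarily large neighborhoods of $D(F)$, in which case taking a pointwise infimum would destroy any control over where $(F)_k$ differs from $F$. This is exactly the step where the holeless hypothesis is indispensable, and the tool for exploiting it is Lemma~\ref{lem_finiteneigh}: it guarantees the \emph{uniform} inclusion $\{F\ne G\}\subset B_{\max(k,C)}(D(F))$ for every $G\in\Theta_k(F)$. With that uniform bound in hand, outside of $B_{\max(k,C)}(D(F))$ every function in the family agrees with $F$, and hence so does the pointwise minimum $(F)_k$. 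This verifies the last remaining condition with the explicit constant $\max(k,C)$ and concludes that $(F)_k\in\Theta_k(F)$.
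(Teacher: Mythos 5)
Your proof is correct and follows the same route the paper intends: the corollary is stated without proof precisely because Lemma~\ref{lem_finiteneigh} supplies the uniform bound $\{F\ne G\}\subset B_{\max(k,C)}(D(F))$ for all $G\in\Theta_k(F)$, after which the pointwise minimum inherits superharmonicity (as in Lemma~\ref{lem_minharmonic}), the sandwich $F-k\leq (F)_k\leq F$, and the containment of $\{F\ne (F)_k\}$ in that same fixed neighborhood. Your write-up simply makes these implicit steps explicit, including the useful observation that the minimum is attained pointwise because the values are integers in a bounded range.
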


\begin{corollary}
\label{cor_grows}
For each $k\geq 1$ we have $$\dist\Big(D(\Psi),\big\{\Psi\ne (\Psi)_k\big\}\Big)\leq k,$$
where the distance is the minimum among the Euclidean distances between pairs of points in two sets.
\end{corollary}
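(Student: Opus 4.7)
My approach is to derive this corollary as a direct specialization of Lemma~\ref{lem_finiteneigh} applied to the particular function $\Psi(z)=\min(0,p\cdot z)$ from Theorem~\ref{th_stabilfn}, taking the holeless constant to be $C=0$.

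First I would verify that $\Psi$ is $0$-holeless. Since $\Psi$ is linear on each of the two half-spaces $\{p\cdot z\geq 0\}$ and $\{p\cdot z\leq 0\}$, the deviation set $D(\Psi)$ is confined to a slab of bounded width around the hyperplane $\{p\cdot z=0\}$, and the complement $\ZZ^n\setminus D(\Psi)$ consists of at most two unbounded connected components. In particular, no connected component of $\ZZ^n\setminus D(\Psi)$ is contained in a finite neighborhood of $D(\Psi)$, so Definition~\ref{def_holeless} is satisfied vacuously with $C=0$; this is precisely the content of Example~\ref{ex_small}.

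With $\Psi$ being $0$-holeless, Corollary~\ref{cor_finiteness} provides $(\Psi)_k\in\Theta_k(\Psi)$, and Lemma~\ref{lem_finiteneigh} applied with $F=\Psi$, $G=(\Psi)_k$, $C=0$ yields the inclusion $\{\Psi\ne(\Psi)_k\}\subset B_{\max(k,0)}(D(\Psi))=B_k(D(\Psi))$. The stated corollary is the weaker assertion that the minimum pairwise Euclidean distance between $D(\Psi)$ and $\{\Psi\ne (\Psi)_k\}$ is at most $k$: if the disagreement set is non-empty, then any of its points is within distance $k$ of some point of $D(\Psi)$ and thus witnesses the bound, while if the disagreement set is empty the inequality holds vacuously (as in the case described in Example~2.8 where $(\Psi)_1=\Psi$).

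I do not anticipate any real obstacle here: essentially all of the work is packaged inside Lemma~\ref{lem_finiteneigh}, and the only thing to check is that $\Psi$ is $0$-holeless, which is immediate from the two-piece linear structure of $\Psi$ and its consequence that $\ZZ^n\setminus D(\Psi)$ has no bounded connected components. The mildly subtle point worth flagging in the write-up is that the $\dist$ in the statement is the infimum of pairwise distances, so the stronger set-inclusion derived above is exactly what is needed (any single disagreement point close to $D(\Psi)$ suffices).
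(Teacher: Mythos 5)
Your proposal is correct and is essentially the argument the paper intends: the corollary is an immediate consequence of Example~\ref{ex_small}, Lemma~\ref{lem_finiteneigh}, and Corollary~\ref{cor_finiteness}, exactly as you assemble them. The only nitpick is that Definition~\ref{def_holeless} asks for a constant $C>0$ rather than $C=0$, but since the holelessness condition is vacuous for $\Psi$ you may take any $C\leq 1$, so $\max(k,C)=k$ for $k\geq 1$ and your conclusion stands unchanged.
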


\section{husking by steps} 
\label{sec_propertieshusk}

Let $F,G$ be two superharmonic integer-valued functions on $\ZZ^n$. Suppose that $H=F-G$ is non-negative and bounded. Let $m$ be the maximal value of $H$. Define the functions $H_k, k=0,1,\dots, m$ as follows:
\begin{equation}
\label{eq_chi}
H_k(v) =\chi(H\geq k) = \left\{
\begin{aligned}
1,\  & \mathrm{if}\ H(v)\geq k,\\
0,\ & \mathrm{otherwise}.
\end{aligned}
\right.
\end{equation}

\begin{lemma}
\label{lemma_slice}
In the above settings, the function $F-H_m$ is superharmonic.
\end{lemma}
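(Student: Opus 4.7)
\medskip

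The plan is a straightforward case analysis on whether or not $v$ lies in the level set $A = \{v : H(v) = m\}$, where the key input is that $m$ is the \emph{maximal} value of $H = F - G$.

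First I would unpack the definitions. Since $H$ is integer-valued with maximum $m$, the indicator $H_m = \chi(H \geq m)$ is simply the characteristic function of $A$, so $H_m(v) \in \{0,1\}$ and $H_m(v) = 1$ iff $v \in A$. For any $v \in \ZZ^n$, let $k(v) = |\{v' \sim v : v' \in A\}|$ denote the number of neighbors lying in $A$. Then
\begin{equation*}
\Delta H_m(v) = -2n\, H_m(v) + k(v),
\end{equation*}
and the goal is to verify $\Delta F(v) - \Delta H_m(v) \leq 0$ in both cases.

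For $v \notin A$, we have $H_m(v) = 0$, so $\Delta H_m(v) = k(v) \geq 0$, and then
\begin{equation*}
\Delta(F - H_m)(v) = \Delta F(v) - k(v) \leq 0,
\end{equation*}
since $F$ is superharmonic. This case is immediate and carries none of the content.

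The case $v \in A$ is the heart of the argument, and this is where the maximality of $m$ is essential. At such $v$, for every neighbor $v' \sim v$ we have $H(v') \leq m = H(v)$, with equality iff $v' \in A$. Because $H$ is integer-valued, $v' \notin A$ forces $H(v') \leq m - 1$. Subtracting the Laplacians,
\begin{equation*}
\Delta F(v) - \Delta G(v) = \sum_{v'\sim v} \bigl(H(v') - H(v)\bigr) \leq -\bigl(2n - k(v)\bigr),
\end{equation*}
the inequality coming from the $2n - k(v)$ neighbors outside $A$, each contributing at most $-1$. Using superharmonicity of $G$, this gives $\Delta F(v) \leq k(v) - 2n$. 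Combined with $\Delta H_m(v) = -2n + k(v)$, this yields
\begin{equation*}
\Delta(F - H_m)(v) = \Delta F(v) + 2n - k(v) \leq 0,
\end{equation*}
as required. I do not foresee a real obstacle: the slightly delicate point is just remembering to exploit integrality of $H$ to push $H(v') \leq m$ down to $H(v') \leq m - 1$ off of $A$, which is exactly what converts the weak inequality $\Delta F \leq \Delta G$ into the stronger bound $\Delta F(v) \leq k(v) - 2n$ needed to absorb the $-2n$ contribution from $\Delta H_m$ at points of $A$.
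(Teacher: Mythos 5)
Your proof is correct and follows essentially the same route as the paper's: both reduce to the points of the maximal level set $\{H=m\}$, and there combine superharmonicity of $G$ with the integrality observation that $H(v')\leq m-1$ at neighbors outside that set. The paper phrases this as a single chain of inequalities on $2n(F-H_m)(v)$ versus the neighbor sum, while you rearrange the same computation in terms of Laplacians; the content is identical.
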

\begin{proof}
Indeed, $F-H_m$ is superharmonic outside of the set  $\{H=m\}$. Look at any point $v$ such that $H(v)=m$. Then we conclude by $$2n(F-H_m)(v)= 2nG(v)+2n(m-1)\geq \sum_{w\sim v} G (w)+2n(m-1)\geq \sum_{w\sim v} (F-H_m)(w).$$
\end{proof}

We repeat this procedure for $F-H_m$; namely, consider $F-H_m-H_{m-1}, F-H_m-H_{m-1}-H_{m-2}$, etc. We have $$H=H_m+H_{m-1}+ H_{m-2}+\dots+ H_1,$$ and it follows from subsequent applications of Lemma~\ref{lemma_slice} that all the functions $F-\sum_{n=m}^{m-k+1} H_n$ are superharmonic, for $k=1,2,\dots,m$. Also, it is clear that $$0 \leq \left(F-\sum_{n=m}^{m-k+1} H_n\right)-\left(F-\sum_{n=m}^{m-k} H_n\right) = H_{m-k}\leq 1$$ at all $v\in\G,k=0,\dots,m$.

Consider a superharmonic function $F.$ We are going to prove that two consecutive huskings (see Definition~\ref{def_thetan}) of $F$ differ at most by one at every point of $\ZZ^n$.

\begin{proposition}
\label{prop_slicingFn}For all $k\in\NN$
$$0\leq (F)_k-(F)_{k+1}\leq 1.$$
\end{proposition}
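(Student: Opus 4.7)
The lower inequality $(F)_k \geq (F)_{k+1}$ is immediate from the inclusion $\Theta_k(F) \subseteq \Theta_{k+1}(F)$: relaxing the depth constraint from $F-k$ to $F-(k+1)$ only enlarges the admissible set, so the pointwise infimum can only decrease. The substantive content is the upper bound $(F)_k \leq (F)_{k+1}+1$, and my strategy is to exhibit a function $G \in \Theta_k(F)$ lying pointwise below $(F)_{k+1}+1$; then the minimality defining $(F)_k$ gives $(F)_k \leq G \leq (F)_{k+1}+1$.

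The candidate I would try is
\[
G := \min\bigl((F)_{k+1}+1,\ (F)_k\bigr).
\]
The motivation is that the naive constant-shift $(F)_k - 1$, while superharmonic and sandwiched between $F-(k+1)$ and $F$, fails the finite-neighborhood clause in the definition of $\Theta_k(F)$ (it disagrees with $F$ everywhere). Gluing it to $(F)_{k+1}$ via a minimum repairs this: outside a large enough neighborhood of $D(F)$ one has $(F)_k = F = (F)_{k+1}$ by Corollary~\ref{cor_finiteness}, so $G$ collapses to $\min(F+1, F) = F$ in that region.

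The verification that $G \in \Theta_k(F)$ is then routine. Superharmonicity follows from Lemma~\ref{lem_minharmonic}, as both $(F)_{k+1}+1$ and $(F)_k$ are superharmonic (the latter being a pointwise minimum of integer-valued superharmonic functions bounded below by $F-k$, hence attained at each point, and one compares $\Delta G$ against $\Delta$ of a minimizer at the center). The two-sided bound $F-k \leq G \leq F$ is pointwise: $G \leq (F)_k \leq F$, and at each point $G$ equals either $(F)_k \geq F-k$ or $(F)_{k+1}+1 \geq (F-(k+1))+1 = F-k$, the latter bound being immediate from the fact that every member of $\Theta_{k+1}(F)$ is at least $F-(k+1)$. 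Hence $G \geq (F)_k$ by minimality, which unwinds to $(F)_{k+1}+1 \geq (F)_k$ pointwise. The only conceptually delicate point — and the only plausible obstacle — is noticing that the naive candidate $(F)_k - 1$ fails the support condition, which is precisely what forces the min-construction; everything else is a short calculation.
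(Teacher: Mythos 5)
Your proof is correct, but it takes a genuinely different route from the paper's. The paper argues by contradiction with the slicing machinery of this section: if $M=\max\bigl((F)_k-(F)_{k+1}\bigr)\geq 2$, then by Lemma~\ref{lemma_slice} the top slice $(F)_k-\chi\bigl((F)_k-(F)_{k+1}\geq M\bigr)$ is superharmonic, and it still lies above $F-k$, since otherwise subtracting one further slice would push $(F)_{k+1}$ strictly below $F-(k+1)$; this produces an element of $\Theta_k(F)$ undercutting $(F)_k$, a contradiction. You instead exhibit the explicit competitor $\min\bigl((F)_{k+1}+1,(F)_k\bigr)\in\Theta_k(F)$ and read the bound off the defining minimality of $(F)_k$ --- a direct argument that replaces Lemma~\ref{lemma_slice} by the more elementary Lemma~\ref{lem_minharmonic} and is shorter; what it loses is the slice decomposition itself, which the paper reuses elsewhere. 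Two points deserve attention. First, both arguments need $(F)_k$ and $(F)_{k+1}$ to be superharmonic even though they are infima over infinite families; your parenthetical justification (the values at each point form a finite set of integers in $[F(z)-k-1,F(z)]$, so the minimum is attained and one compares $\Delta$ against a minimizer at the center) is exactly what is required and should not be omitted. Second, your verification of the finite-support clause of $\Theta_k(F)$ goes through Corollary~\ref{cor_finiteness}, which formally assumes $F$ holeless, while the proposition is stated for an arbitrary superharmonic $F$. This is not a defect specific to your argument: the paper's competitor differs from $F$ on $\{F\neq(F)_k\}\cup\{(F)_k\neq(F)_{k+1}\}$ and its membership in $\Theta_k(F)$ is asserted without checking the support condition either, so both proofs implicitly use the same finiteness; in every application of the proposition $F$ is a husking of a holeless function, so the hypothesis is harmless, but you are right to make the dependence explicit.
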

\begin{proof}
By definition, $(F)_k\geq (F)_{k+1}$ at every point of $\mathbb{Z}^n$. If the inequality $(F)_k-(F)_{k+1}\leq 1$ doesn't hold, then the maximum $M$ of the function $H=(F)_{k}-(F)_{k+1}$ is at least $2$. We will prove that 
$$(F)_k-\chi (H\geq M)\geq F-k.$$

Namely, by Lemma \ref{lemma_slice} the function $(F)_k-\chi (H\geq M)$ is superharmonic. Suppose that $$(F)_k-\chi (H\geq M)< F-k \text{\ at\ a\ point\ $v\in\ZZ^n$}.$$ 
Since the set $\{H\geq 1\}$ contains the set $\{H\geq M\}$, we arrive to a contradiction by saying that, at $v$, $$F-(k+1)>(F)_k-\chi (H\geq M) - \chi (H\geq 1)\geq (F)_{k+1}\geq F-(k+1).$$ 

Therefore $(F)_k-\chi (H\geq M)\in\Theta_k(F)$ which contradicts the minimality of $(F)_k$.

\end{proof}

\begin{corollary}\label{cor_fnmin}
Proposition~\ref{prop_slicingFn} and Lemma~\ref{lem_finiteneigh} imply that for $C$-holeless $F$ the function $(F)_{k+1}$ can be characterized as the point-wise minimum of all superharmonic functions $G$ such that $(F)_k-1\leq G\leq (F)_k$  and $(F)_k-G$ vanishes outside some finite neighborhood of $D((F)_k)$ (recall that the distance between $D(F), D((F)_k)$ is at most $\max(C,k)$). 
In other words, $k$-husking $(F)_k$ of $F$ is the same as $1$-husking of $(k-1)$-husking $(F)_{k-1}$ of $F$.
\end{corollary}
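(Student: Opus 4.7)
The plan is to verify both directions of the claimed pointwise-minimality. Let $T_k$ denote the collection of integer-valued superharmonic $G:\ZZ^n\to\ZZ$ satisfying $(F)_k-1\leq G\leq (F)_k$ and with $\{G\ne (F)_k\}$ contained in some finite neighborhood of $D((F)_k)$; the goal is to show $(F)_{k+1}=\min T_k$ pointwise.

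First I would show that $(F)_{k+1}\in T_k$. Its superharmonicity and its membership in $\Theta_{k+1}(F)$ come from Corollary~\ref{cor_finiteness} applied to the $C$-holeless $F$; the two-sided bound $(F)_k-1\leq (F)_{k+1}\leq (F)_k$ is exactly Proposition~\ref{prop_slicingFn}; and the localisation requirement follows by combining two applications of Lemma~\ref{lem_finiteneigh}, which place $\{F\ne (F)_k\}\subseteq B_{\max(k,C)}(D(F))$ and $\{F\ne (F)_{k+1}\}\subseteq B_{\max(k+1,C)}(D(F))$, so their union (which contains $\{(F)_k\ne (F)_{k+1}\}$) sits in a finite neighborhood of $D(F)$. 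The parenthetical bound on the distance between $D(F)$ and $D((F)_k)$ then converts this into a finite neighborhood of $D((F)_k)$.

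Next, to establish minimality, take an arbitrary $G\in T_k$ and show that $G\in\Theta_{k+1}(F)$, whereupon $G\geq (F)_{k+1}$ is immediate from the minimality definition of the $(k+1)$-husking. Indeed $G\leq (F)_k\leq F$ by the $T_k$-condition, while $G\geq (F)_k-1\geq F-(k+1)$ since $(F)_k\in\Theta_k(F)$ by Corollary~\ref{cor_finiteness}; superharmonicity of $G$ is assumed; and $\{G\ne F\}\subseteq \{G\ne (F)_k\}\cup\{(F)_k\ne F\}$ is contained in a finite neighborhood of $D(F)$, the first piece via the $T_k$-condition together with the distance estimate, the second directly by Lemma~\ref{lem_finiteneigh}.

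The only non-bookkeeping step is justifying the parenthetical estimate on the distance between $D(F)$ and $D((F)_k)$. One inclusion is immediate: a point $v\in D((F)_k)\setminus D(F)$ has $F$ harmonic and $(F)_k$ non-harmonic at $v$, so some $w$ with $w=v$ or $w\sim v$ must satisfy $F(w)\ne (F)_k(w)$, placing $v$ in $B_{\max(k,C)+1}(D(F))$ by Lemma~\ref{lem_finiteneigh}. The reverse inclusion is analogous in spirit but relies on the holeless hypothesis to prevent points of $D(F)$ from escaping a fixed neighborhood of the discrepancy set $\{F\ne (F)_k\}$, and hence of $D((F)_k)$. I expect this bookkeeping about the two deviation sets, rather than any conceptually new ingredient, to be the main obstacle.
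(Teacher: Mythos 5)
The paper offers no written proof of this corollary, so I can only judge your argument on its own terms. Your minimality half is correct and complete: for $G$ with $(F)_k-1\leq G\leq (F)_k$ and $\{G\ne (F)_k\}$ in a finite neighborhood of $D((F)_k)$, the chain $G\leq (F)_k\leq F$, $G\geq (F)_k-1\geq F-(k+1)$, together with the inclusion $D((F)_k)\subseteq B_{\max(k,C)+1}(D(F))$ (your ``easy direction'', which is correctly derived from Lemma~\ref{lem_finiteneigh}), does put $G$ in $\Theta_{k+1}(F)$, whence $G\geq (F)_{k+1}$. This half only ever uses the easy inclusion, and you use it correctly.

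The gap is in the membership half, i.e.\ in showing that $\{(F)_k\ne (F)_{k+1}\}$ lies in a finite neighborhood of $D((F)_k)$. Your route passes through a finite neighborhood of $D(F)$ and then invokes the reverse inclusion $D(F)\subseteq B_M(D((F)_k))$, which you describe as ``analogous in spirit'' and leave unproved; but it is not analogous, and your sketch targets the wrong set. A point $v\in D(F)$ at which $F=(F)_k$ on $v$ and all its neighbors automatically lies in $D((F)_k)$, so such points are never the problem; the danger is that the discrepancy set $\{F\ne (F)_k\}$ itself (and with it the remaining points of $D(F)$) drifts far from $D((F)_k)$, i.e.\ that $(F)_k$ is harmonic on a large ball meeting $\{F\ne(F)_k\}$. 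Nothing in Lemma~\ref{lem_finiteneigh} rules this out: that lemma controls $\{F\ne G\}$ by $D(F)$, never by $D(G)$. To close the gap one can set $H=F-(F)_k$, observe that $\{\Delta H>0\}\subseteq D((F)_k)$ (since $\Delta H>0$ forces $\Delta (F)_k<\Delta F\leq 0$), and apply Lemma~\ref{lemma_path}: at any $z_0$ with $H(z_0)\geq 1$ lying at distance more than $k+2+r$ from $D((F)_k)$, $H$ admits no strictly decreasing step, so $H\geq 1$ on all of $B_r(z_0)$; hence $B_r(z_0)\subseteq\{F\ne (F)_k\}\subseteq B_{\max(k,C)}(D(F))$. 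This yields the desired containment whenever $B_{\max(k,C)}(D(F))$ contains no arbitrarily large balls (true for the functions actually used in the paper, whose deviation sets are thin), but it is not a consequence of $C$-holelessness alone, so either this hypothesis must be made explicit or a different argument supplied. As stated, both your proposal and the paper's parenthetical ``recall that the distance between $D(F)$, $D((F)_k)$ is at most $\max(C,k)$'' assert this reverse inclusion without justification.
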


\begin{corollary}
\label{cor_1husking}
In the above assumptions, if $(F)_k\ne (F)_{k+1}$ then there exists $z_0$ such that $(F)_{k+1}(z_0)=F(z_0)-(k+1)$.
\end{corollary}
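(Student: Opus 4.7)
The proof should be a short contradiction argument that unpacks the definition of $(F)_k$ as a pointwise minimum of $\Theta_k(F)$. I would argue as follows.

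Suppose for contradiction that no such $z_0$ exists, that is, $(F)_{k+1}(z)>F(z)-(k+1)$ for every $z\in\ZZ^n$. Since both $F$ and $(F)_{k+1}$ are integer-valued, this strict inequality upgrades to $(F)_{k+1}\geq F-k$ everywhere. My aim is then to show that $(F)_{k+1}$ itself lies in $\Theta_k(F)$, which would contradict the fact that $(F)_k$ is the pointwise minimum of that set.

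To verify membership in $\Theta_k(F)$, I need three things: superharmonicity of $(F)_{k+1}$, the two-sided bound $F-k\leq (F)_{k+1}\leq F$, and the support condition $\{F\neq (F)_{k+1}\}\subset B_{C'}(D(F))$ for some finite $C'$. Superharmonicity is clear from the definition of the husking. The lower bound is exactly the integer upgrade above; the upper bound $(F)_{k+1}\leq F$ comes from $(F)_{k+1}\in\Theta_{k+1}(F)$, which holds by Corollary~\ref{cor_finiteness} under the standing holelessness assumption on $F$. The support condition likewise follows from $(F)_{k+1}\in\Theta_{k+1}(F)$ together with Lemma~\ref{lem_finiteneigh}. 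Hence $(F)_{k+1}\in\Theta_k(F)$, so by minimality $(F)_k\leq (F)_{k+1}$. The reverse inequality $(F)_{k+1}\leq (F)_k$ is automatic from the inclusion $\Theta_k(F)\subset\Theta_{k+1}(F)$ (and also recorded in Proposition~\ref{prop_slicingFn}). Therefore $(F)_k=(F)_{k+1}$, contradicting the hypothesis, and the proof is complete.

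I do not anticipate a real obstacle here: the statement is essentially a restatement of the minimality characterization of $(F)_k$. The only point that needs care is ensuring that $(F)_{k+1}$ actually belongs to some $\Theta_m(F)$ (so that the support condition holds), and this is precisely what the holelessness hypothesis, via Corollary~\ref{cor_finiteness}, gives us.
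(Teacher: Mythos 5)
Your argument is correct and is essentially the paper's own (one-line) proof: assume no such point exists, upgrade the strict inequality to $(F)_{k+1}\geq F-k$ by integrality, and conclude $(F)_{k+1}=(F)_k$, contradicting the hypothesis. Your justification of the final step by checking $(F)_{k+1}\in\Theta_k(F)$ (via Corollary~\ref{cor_finiteness} and the standing holelessness assumption) is a legitimate way to fill in the ``therefore'' that the paper leaves implicit.
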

Indeed, if there is no such a point, then $(F)_{k+1}\geq F-k$ and therefore $(F)_{k+1}=(F)_k$.

\section{Monotonicity while husking}
\label{sec_mono}
\begin{definition}
\label{def_monotone}
Let $e\in\ZZ^n\setminus\{0\}$. We say that a function $F:\ZZ^n\to\ZZ$ is {\it $e$-increasing} if 
\begin{enumerate}
\item $F$ is a husking of a holeless function,
\item $F(z)\leq F(z+e)$ holds for each $z\in\mathbb{Z}^n$,
\item there exists a constant $C>0$  such that for each $z$ with $F(z)=F(z-e)$, the first vertex $z-ke$ in the sequence $z,z-e,z-2e,\dots$, satisfying $F(z-ke)<F(z-(k-1)e)$, belongs to $B_C(D(F))$.
\end{enumerate}
\end{definition}

\begin{example}
\label{ex_monotone}
Note that $\Psi$ is $e$-increasing if and only if $p\cdot e>0$.
\end{example}

\begin{lemma}
If $F$ is $e$-increasing, then $(F)_1,$ the $1$-husking of $F$, is also $e$-increasing. 
\end{lemma}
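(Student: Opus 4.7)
The plan is to verify the three clauses of Definition~\ref{def_monotone} for $(F)_1$ in turn. Clause (1) is immediate: if $F = (\tilde F)_k$ with $\tilde F$ holeless, then by Corollary~\ref{cor_fnmin} we have $(F)_1 = (\tilde F)_{k+1}$, which is again a husking of the holeless $\tilde F$. Clause (2) will follow from applying Lemma~\ref{lem_notevident} to $F$ and its translate $F'(z) := F(z+e) \geq F(z)$: this gives $(F)_1(z) \leq (F')_1(z) = (F)_1(z+e)$, the last equality by translation-invariance of the husking construction (which is built into Definition~\ref{def_thetan}).

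The substance is clause (3). I will fix $z$ with $(F)_1(z) = (F)_1(z-e) = M$ and let $k^* \geq 2$ be the first index with $(F)_1(z - k^* e) < M$. By Proposition~\ref{prop_slicingFn} the function $\delta := F - (F)_1$ takes only the values $0$ and $1$, and by Lemma~\ref{lem_finiteneigh} its support lies in $B_C(D(F))$ for the holeless constant $C$ of $F$. Along the segment $z - je$, $0 \leq j \leq k^* - 1$, one has $F(z-je) = M + \delta(z-je) \in \{M,M+1\}$; combined with the monotonicity of $F$ along $-e$, there is a threshold $j_0 \in \{0,\dots,k^*\}$ with $F \equiv M+1$ for $j < j_0$ and $F \equiv M$ for $j_0 \leq j \leq k^*-1$.

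I then case-split on $j_0$, applying property (3) of $F$ at whichever of $z$, $z-e$, or $z - j_0 e$ has two consecutive $F$-values equal. At step $k^*$ itself there is a clean dichotomy: either $F$ strictly decreases there, and then property (3) of $F$ places $z - k^* e$ in $B_C(D(F))$; or $F(z - k^* e) = F(z - (k^*-1)e)$, and then $\delta(z - k^* e) = 1$ is forced to realize the $(F)_1$-drop, so $z - k^* e \in \mathrm{supp}(\delta) \subset B_C(D(F))$. The boundary subcases $j_0 \in \{1, k^*\}$ or $k^* = j_0+1$, where property (3) of $F$ cannot be applied at $z$ directly, are handled by stepping once or twice along $-e$ from a point already known to lie in $B_C(D(F))$, yielding $z - k^* e \in B_{C'}(D(F))$ for a constant $C'$ depending only on $C$ and $\|e\|$.

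To convert closeness to $D(F)$ into closeness to $D((F)_1)$, I will establish the general inclusion $D(F) \subset B_1(D((F)_1))$ by a short Laplacian calculation: if $v \in D(F) \setminus D((F)_1)$, the identity $\Delta(F)_1 - \Delta F = -\Delta \delta$ combined with $\Delta F(v) < 0 = \Delta(F)_1(v)$ forces $\delta(v) = 1$ and $\sum_{w\sim v}\delta(w) < 2n$; picking a graph-neighbor $w_0$ of $v$ with $\delta(w_0) = 0$, one obtains $\Delta(F)_1(w_0) = \Delta F(w_0) - |\{w' \sim w_0 : \delta(w') = 1\}| \leq -1$, so $w_0 \in D((F)_1)$. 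Combining with the previous paragraph yields $z - k^* e \in B_{C'+1}(D((F)_1))$, verifying clause (3) with constant $C'+1$. The main obstacle is the case analysis in clause (3), particularly the boundary subcases above, where property (3) of $F$ fails at $z$ and one must absorb the bounded cost of a short translation along $-e$ into the final constant.
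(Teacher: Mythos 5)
Your proposal is correct, and on two of the three clauses it departs from the paper's argument in an interesting way. For clause (2) the paper argues \emph{a contrario}: it assumes the set $A=\{z\,\vert\,F(z-e)=F(z),\,H(z-e)=0,\,H(z)=1\}$ (with $H=F-(F)_1$) is nonempty, builds from it a set $B$ lying in a finite neighborhood of $D(F)$ (this is where clause (3) of $F$ is used), and subtracts $\sum_{z\in B}\delta_z$ from $(F)_1$ to produce a smaller element of $\Theta_1(F)$, contradicting minimality. Your route --- apply Lemma~\ref{lem_notevident} to $F$ and its translate $F(\cdot+e)\geq F$ and use translation-equivariance of the husking --- is shorter, avoids the auxiliary sets entirely, and does not need clause (3) of $F$ at this stage; it is the same device the paper itself uses later in Proposition~\ref{cor_wavegp}, so nothing external is being imported. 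For clause (3) the paper is very terse: it cites $\{F\ne(F)_1\}\cup D((F)_1)\subset B_C(D(F))$ and $\vert F-(F)_1\vert\leq 1$ and announces the constant $C+\vert e\vert+1$, leaving implicit both the case analysis along the segment $z,z-e,\dots,z-k^{*}e$ and, more importantly, the passage from closeness to $D(F)$ to closeness to $D((F)_1)$ (the cited inclusion goes in the wrong direction for what clause (3) of $(F)_1$ literally requires). Your explicit threshold decomposition of $F$ into the values $M+1$ and $M$ along the segment, the dichotomy at step $k^{*}$, and especially the Laplacian argument giving $D(F)\subset B_1(D((F)_1))$ (using $0\leq F-(F)_1\leq 1$ from Proposition~\ref{prop_slicingFn} and superharmonicity of $F$) fill exactly the gaps the paper glosses over. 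The price is a longer case analysis and a larger final constant, but the boundary subcases you flag are genuinely there and your way of absorbing them by stepping a bounded number of times along $-e$ is sound.
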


\begin{proof}

Corollary~\ref{cor_fnmin} gives the property {\bf a)} of Definition~\ref{def_monotone}, because if $F=(G)_n$, and $G$ is holeless, then $(F)_1 = (G)_{n+1}$.
To prove that $(F)_1$ satisfies {\bf b)} in Definition~\ref{def_monotone} we
 argue {\it a contrario}. Let $H= F-(F)_1$. Suppose that the set $$A=\{z\in\mathbb{Z}^n \vert  F(z-e)=F(z),H(z-e)=0, H(z)=1\}$$ is not empty. Since $H\vert _A=1$, we have $A\subset B_C(D(G))$. Consider the set $$B=\{z\vert  H(z)=0, \exists k\in\mathbb{Z}_{> 0}, z+k\cdot e\in A, F(z)=F(z+k\cdot e)\}.$$ 
 
Consider $z\in B$. Since $z+k\cdot e \in A\subset B_C(D(F))$ and $F(z)=F(z+k\cdot e)$, then {\bf c)} in Definition~\ref{def_monotone} impose an absolute bound on $k$  and therefore $B$ belongs to a finite neighborhood of $D(F)$.  Consider the following function $$\tilde F=(F)_1-\sum_{z\in B}\delta_z.$$ 
It is easy to verify that $\tilde F(z)\leq \tilde F(z+e)$ for each $z$. Note that $$\Delta \tilde F(z)\leq \Delta (F)_1(z)\leq 0$$ automatically for all $z\in\ZZ^n\setminus B$. 
 Pick any $z\in B$. Since $z+k\cdot e\in A$ for some $k\in\ZZ_{>0}$, we have
$$2n\cdot\tilde F(z)=2n\cdot(F)_1(z+k\cdot e)\geq \sum_{z'\sim z}(F)_1(z'+k\cdot e)\geq \sum_{z'\sim z}\tilde F(z').$$ 

 Therefore $\tilde F$ is superharmonic, and satisfies $F\geq \tilde F\geq F-1$ by construction, which contradicts to the minimality of $(F)_1$ in $\Theta_1(F)$. 
 
  Finally, by Corollary~\ref{cor_finiteness} the sets $\{F\ne (F)_1\}$ and $D((F)_1)$ belong to $B_C(D(F))$ for some $C>0$. Therefore the fact that $\vert (F)_1-F\vert \leq 1$ (Proposition~\ref{prop_slicingFn}) gives {\bf c)} with the constant $C+\vert e\vert +1$. 
\end{proof}

\begin{corollary}\label{cor_monotf}
Let $e\in\ZZ^n\setminus \{0\}$. If $\Psi$ is $e$-increasing, then $(\Psi)_k$ is also $e$-increasing.
\end{corollary}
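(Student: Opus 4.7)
The plan is to prove this by straightforward induction on $k$, using the preceding lemma as the inductive engine and Corollary~\ref{cor_fnmin} to identify consecutive huskings.

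For the base case $k=0$ (interpreting $(\Psi)_0 = \Psi$), the hypothesis that $\Psi$ is $e$-increasing is exactly what we are given. Note that property \textbf{a)} of Definition~\ref{def_monotone} is consistent with this identification since $\Psi$ is itself holeless by Example~\ref{ex_small}, so it trivially qualifies as a (zeroth) husking of a holeless function.

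For the inductive step, suppose that $(\Psi)_k$ is $e$-increasing. Setting $F=(\Psi)_k$, the previous lemma asserts that $(F)_1$ is $e$-increasing; we simply need to know that its hypotheses apply. Property \textbf{a)} for $F$ is immediate from the inductive hypothesis and the fact that $\Psi$ is holeless (Example~\ref{ex_small}), so $F=(\Psi)_k$ is a $k$-husking of a holeless function. Applying Corollary~\ref{cor_fnmin}, we have $((\Psi)_k)_1 = (\Psi)_{k+1}$, so the lemma yields that $(\Psi)_{k+1}$ is $e$-increasing.

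There is no real obstacle here, only the bookkeeping of verifying that the preceding lemma can be iterated; the essential work was already done in its proof (in particular, the preservation of the ``bounded runs of constant values'' condition \textbf{c)}, which was the technical heart there). The only thing worth double-checking is that the constant $C$ in condition \textbf{c)} may grow with $k$ (by roughly $|e|+1$ per step, as noted at the end of the lemma's proof), but since we only require the existence of some such constant for each fixed $k$, this is harmless.
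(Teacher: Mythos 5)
Your proposal is correct and is exactly the argument the paper intends: the corollary is stated as an immediate consequence of the preceding lemma, obtained by induction on $k$ via Corollary~\ref{cor_fnmin} (identifying $(\Psi)_{k+1}$ with the $1$-husking of $(\Psi)_k$) and Example~\ref{ex_small} (holelessness of $\Psi$ for the base case). Your added remark about the constant in condition \textbf{c)} growing with $k$ is a sensible sanity check but, as you note, harmless.
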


\section{Discrete superharmonic integer-valued functions.}
\label{sec_discrete}

\begin{lemma}(\cite{duffin}, Theorem 5)
\label{lemma_duffin} 
There is an absolute constant $C$ with the following property. Let $R>1,z\in\ZZ^n$, and $F:B_R(z)\cap \ZZ^n\to\RR$ be a discrete {\bf non-negative} harmonic function. Let $z'\sim z$, then $$\vert F(z')-F(z)\vert \leq \frac{C\cdot \max_{w\in B_R(z)} F(w)}{R}.$$
\end{lemma}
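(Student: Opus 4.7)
The plan is to obtain this as a discrete interior gradient estimate in two steps: first represent $F$ at the two neighbouring points through the discrete harmonic measure of a ball contained in $B_R(z)$, and then bound the total variation distance between these two harmonic measures by $O(1/R)$. Let $(X_t)$ be the simple random walk on $\ZZ^n$ and $\tau$ its first exit time from $B_{R-1}(z)\cap\ZZ^n$. Harmonicity of $F$ together with optional stopping gives
$$F(w)=\mathbb E_w[F(X_\tau)]=\sum_y K_R(w,y)F(y)\qquad\text{for } w\in B_{R-1}(z)\cap\ZZ^n,$$
where $K_R(w,\cdot)$ is the hitting distribution of $\partial B_{R-1}(z)$ starting from $w$. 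Subtracting at $w=z$ and $w=z'$ and using $F\geq 0$ together with $\sum_y K_R(z,y)=\sum_y K_R(z',y)=1$ yields
$$|F(z)-F(z')|\leq \max_y F(y)\cdot\tfrac12\,\|K_R(z,\cdot)-K_R(z',\cdot)\|_1.$$

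It remains to show that the total variation distance on the right is $O(1/R)$. For this I would couple two simple random walks $X^z$ and $X^{z'}$, started at $z$ and $z'$, on a common probability space so that they follow identical trajectories from the first moment they occupy the same site. A natural realisation is a mirror coupling across the hyperplane $H$ bisecting the segment $[z,z']$: in the direction perpendicular to $H$ the two walks take reflected steps, and in every other direction they move in lockstep. Because $\ZZ^n$ is bipartite, two walks starting at adjacent vertices never occupy the same site at equal integer times, so one has to apply a small parity adjustment -- for instance by passing to the lazy simple random walk, which has the same harmonic functions and the same harmonic measure on $\partial B_{R-1}(z)$. Conditional on the walks meeting before either exits $B_{R-1}(z)$, the two exit distributions coincide, so
$$\|K_R(z,\cdot)-K_R(z',\cdot)\|_1\leq 2\,\Pr(\text{the walks do not meet before exit}).$$
The right-hand side is then a one-dimensional gambler's ruin quantity: the signed coordinate of $X^z-X^{z'}$ along the perpendicular to $H$ is a bounded martingale, nonzero only at the fraction $1/n$ of time steps taken in that direction, and it has to reach $0$ before either walk leaves a region of linear extent $R$. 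The classical gambler's ruin estimate gives failure probability at most $C/R$.

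The part I expect to take the most care is the rigorous implementation of the mirror coupling on $\ZZ^n$: one must check that the non-meeting event genuinely reduces to a one-dimensional hitting problem uniformly in the dimension $n$, and that the constant produced this way can be taken independent of both $n$ and $R$, as required by the word ``absolute'' in the statement. Once the coupling is in place, combining the three ingredients above produces the desired inequality $|F(z')-F(z)|\leq C\max_w F(w)/R$.
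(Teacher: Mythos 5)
The paper does not prove this lemma at all --- it is quoted from Duffin's \emph{Discrete potential theory} (Theorem 5), where it is obtained by discrete potential theory, i.e.\ explicit estimates on the lattice Green's function and Poisson kernel. Your probabilistic route is therefore genuinely different from the source. Its first three ingredients are sound: the harmonic-measure representation $F(w)=\sum_y K_R(w,y)F(y)$ via optional stopping, the bound $|F(z)-F(z')|\leq \max F\cdot\tfrac12\|K_R(z,\cdot)-K_R(z',\cdot)\|_1$ (which does use $F\geq 0$ correctly), and the reduction of the total variation distance to a non-coupling probability. You are also right to flag the bipartiteness obstruction and correct that passing to the lazy walk (same harmonic functions, same exit law) removes it; note that the lazy mirror coupling must be arranged so that at each update of the distinguished coordinate only \emph{one} of the two walks moves, otherwise the difference stays odd forever. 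The worry about the constant being independent of $n$ is not essential here: the dimension is fixed throughout the paper, so a constant $C=C(n)$ suffices for every application of the lemma.

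The genuine gap is the final step, where you assert that the non-meeting probability ``is a one-dimensional gambler's ruin quantity.'' It is not. Gambler's ruin bounds the probability that the signed difference $D_t$ reaches level $\sim R$ before reaching $0$; that event indeed has probability $O(1/R)$. But the failure event you need to control is that \emph{the $n$-dimensional walks exit the ball} before $D$ reaches $0$, and the exit time of the ball is not measurable with respect to $D$: the walks can leave $B_{R-1}(z)$ through the transverse coordinates while $D$ is still equal to $1$. Naive optional stopping on the bounded martingale $D_{t\wedge T_0\wedge\tau}$ gives $1=E[D_\tau\mathbf{1}_{\{T_0>\tau\}}]$, which (since $D_\tau\leq 2R+1$) yields only the \emph{lower} bound $P(\mathrm{fail})\geq c/R$; the upper bound you need is equivalent to showing $E[D_\tau\mid T_0>\tau]\geq cR$, i.e.\ that conditioned on non-coupling the separation has typically grown to order $R$ by the exit time. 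That is true, but it is the actual content of the lemma and requires a real argument --- for instance a second-moment estimate on $D_{\tau\wedge T_0}$ combined with the fact that $\tau\gtrsim R^2$ outside an event of suitably small probability, or an iteration over dyadic spatial scales in which each scale is shown to couple with probability at least $\tfrac12-o(1)$. As written, the proposal replaces the only quantitatively hard step by an appeal to a classical estimate that does not apply to the stopping time in question.
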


Morally, this lemma provides an estimate on a derivative of a discrete harmonic function, for $F(z')-F(z)$ can be thought of a discrete derivative of $F$ in the direction $z'-z$.

\begin{lemma}[Integer-valued discrete harmonic functions of sublinear growth]
\label{lemma_harmonic} 
Let $z\in\ZZ^n$ and $\mu>0$ be a constant. Let $R>4\mu C$. For a discrete {\bf integer-valued} harmonic function $F:B_{3R}(z)\cap\ZZ^n\to\ZZ$, the condition $\vert F(z')\vert \leq \mu R$ for all $z'\in B_{3R}(z)$ implies that $F$ is linear in $B_{R}(z)\cap \ZZ^n$. 
\end{lemma}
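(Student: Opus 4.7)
The plan is to apply Lemma~\ref{lemma_duffin} twice: first to bound discrete first differences of $F$, then to bound discrete second differences and force them to vanish by integer-valuedness, so that $F$ becomes affine on $B_R(z)$.

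First I reduce to the non-negative case. Let $m=\min_{B_{3R}(z)}F$ and set $\tilde F = F - m$; then $\tilde F$ is a non-negative integer-valued discrete harmonic function on $B_{3R}(z)$ with $0 \leq \tilde F \leq 2\mu R$, and $F$ is affine on $B_R(z)$ if and only if $\tilde F$ is. Fix a coordinate direction $e_i$ and set $G_i(w) = \tilde F(w+e_i) - \tilde F(w)$, which is again integer-valued and discrete harmonic wherever defined. For each $w \in B_{2R}(z)$ the ball $B_R(w)$ lies in $B_{3R}(z)$, so Lemma~\ref{lemma_duffin} applied to $\tilde F$ on $B_R(w)$ gives
$$|G_i(w)| \leq \frac{C\cdot 2\mu R}{R} = 2C\mu.$$
Consequently $G_i + 2C\mu$ is a non-negative integer-valued discrete harmonic function on $B_{2R}(z)$ with maximum at most $4C\mu$.

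Next I apply the same lemma to $G_i + 2C\mu$. For each $w \in B_R(z)$ and each coordinate direction $e_j$, the ball $B_R(w)$ lies in $B_{2R}(z)$, so Lemma~\ref{lemma_duffin} yields
$$|G_i(w+e_j) - G_i(w)| \leq \frac{C\cdot 4C\mu}{R} = \frac{4C^2\mu}{R}.$$
The quantitative hypothesis on $R$ is tailored to make this strictly less than $1$; since the left-hand side is an integer, it must vanish for every $w \in B_R(z)$ and every pair $(i,j)$. Thus each $G_i$ is constant on $B_R(z)$, i.e.\ $F(w+e_i) - F(w)$ is independent of $w$ in each coordinate direction, and a telescoping sum along a coordinate path from $z$ to any $w \in B_R(z)$ shows that $F$ is affine on that ball.

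The main obstacle is constant bookkeeping: for the second-difference bound $4C^2\mu/R$ to be strictly less than one one needs $R > 4C^2\mu$, so the $C$ appearing in the statement of the lemma should be read as a slightly enlarged absolute constant that absorbs the extra factor from the two invocations of Lemma~\ref{lemma_duffin} (equivalently, one balances the two applications over an intermediate radius $R_1 \in (R,3R)$ to optimise the numerical factors). Beyond this arithmetic, the mechanism is the classical one: integer-valued discrete harmonic functions are rigid, and the present lemma is a finite-ball quantitative incarnation of discrete Liouville, obtained by iterating Duffin's gradient estimate once to promote a bound on first differences to vanishing of second differences.
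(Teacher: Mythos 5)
Your proof follows essentially the same route as the paper's: apply Duffin's estimate once on $B_{2R}(z)$ to bound the first differences by $2\mu C$, then again on $B_R(z)$ to make the second differences smaller than $1$, and conclude by integer-valuedness that they vanish, so $F$ is affine. Your constant bookkeeping is in fact slightly more careful than the paper's, which writes the second-difference bound as $4\mu C/R$ rather than $4\mu C^2/R$; as you note, this is harmless since $C$ is an absolute constant that can be enlarged in the hypothesis $R>4\mu C$.
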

\begin{proof} Consider $F$ which satisfies the hypothesis of the lemma.
Note that $0\leq F(z')+\mu R\leq 2\mu R$ for $z'\in B_{3R}(z)$ and applying Lemma~\ref{lemma_duffin} for $B_{2R}(z)$ yields $$\vert \partial_\bullet F(z')\vert \leq \frac{C\cdot 2\mu R}{R}=2\mu C \text{,\ for all $z'\in B_{2R}(z)$}.$$ 
By $\partial_\bullet$ we denote any of the discrete partial derivatives, $\partial_i F(z) = F(z+e_i)-F(z)$ where $e_i$ is the $i$-th coordinate vector. 
Then, applying it again for $0\leq \partial_\bullet F(z')+2\mu C\leq 4\mu C$ yields
\begin{equation*}
\vert \partial_{\bullet}\partial_{\bullet}F(z')\vert \leq \frac{4\mu C}{R}<1 \text{,\ for\  $z'\in B_{R}(z)$\ if\ $R>4\mu C$.}
\end{equation*}  Since $F$ is integer-valued, all the derivatives $\partial_{\bullet}\partial_{\bullet}F$ are also integer-valued. Therefore all the second derivatives of $F$ are identically zero in $B_{R}(z)$, which implies that $F$ is linear in $B_{R}(v)$.
\end{proof}

 Let $A$ be a finite subset of $\ZZ^n$, $\partial A$ be the set of points in $A$ which have neighbors in $\ZZ^n\setminus A$. Let $F$ be any function $A\to \ZZ$.
 
\begin{lemma}
\label{lemma_nabla} In the above hypothesis the following equality holds: 
$$\sum_{z\in A\setminus\partial A} \Delta F(z) = \sum_{\substack{z\in \partial A,\\ z'\in A\setminus\partial A, z\sim z'}} \big(F(z)-F(z')\big).$$
\end{lemma}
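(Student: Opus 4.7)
The plan is to recognise this as a discrete analogue of Green's/Stokes' theorem and prove it by a direct double-counting argument on edges.

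First I would rewrite the Laplacian in its ``gradient'' form. Since every vertex has exactly $2n$ neighbors, the definition $\Delta F(z)=-2nF(z)+\sum_{z'\sim z}F(z')$ is equivalent to
\[
\Delta F(z)=\sum_{z'\sim z}\bigl(F(z')-F(z)\bigr).
\]
Substituting this into the left-hand side gives
\[
\sum_{z\in A\setminus\partial A}\Delta F(z)=\sum_{\substack{z\in A\setminus\partial A\\ z'\sim z}}\bigl(F(z')-F(z)\bigr).
\]
Note that because $z\in A\setminus\partial A$, every neighbor $z'$ of $z$ lies in $A$, so $z'$ belongs either to $A\setminus\partial A$ or to $\partial A$; there are no contributions from outside $A$.

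Next I would split the double sum according to which of the two classes $z'$ belongs to. For each unordered edge $\{z,z'\}$ with both endpoints in $A\setminus\partial A$, the pair appears twice in the sum: once as $(z,z')$ contributing $F(z')-F(z)$, and once as $(z',z)$ contributing $F(z)-F(z')$. These contributions cancel exactly, so the entire ``interior-interior'' part vanishes. What survives is the ``interior-boundary'' part
\[
\sum_{\substack{z\in A\setminus\partial A,\\ z'\in\partial A,\; z\sim z'}}\bigl(F(z')-F(z)\bigr),
\]
which, after relabelling $z\leftrightarrow z'$ (the sum is over the same set of ordered pairs, just with the names of the two vertices of each edge interchanged), is exactly the right-hand side of the claimed identity.

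There is no real obstacle here; the only thing to be careful about is the sign convention and the direction of the oriented edges in the interior-boundary sum, which I would track explicitly in the writeup to make sure the final relabelling produces $F(z)-F(z')$ with $z\in\partial A$ and $z'\in A\setminus\partial A$, as stated.
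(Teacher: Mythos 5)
Your proof is correct and follows essentially the same route as the paper's: expand the Laplacian over the interior, observe that contributions from interior--interior edges cancel in pairs, and identify the surviving interior--boundary terms (with the sign fixed by the relabelling) as the right-hand side. The paper's own argument is just a terser statement of this same cancellation.
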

\begin{proof}
We develop left side by the definition of $\Delta F$. All the terms $F(z)$, except for the vertices $z$ near $\partial A$, cancel each other. So we conclude by a direct computation.
\end{proof}

\section{Proof of Theorem~\ref{th_stabilfn} via reduction to a cylinder}

Recall that for a primitive vector $p\in\ZZ^n$ we consider
\begin{equation}\label{eq_P}
\Psi(z)=\min(0,p\cdot z), z\in\ZZ^n.
\end{equation} 

Our aim is to prove that the sequence of $k$-huskings $(\Psi)_k$ stabilises eventually as $k\to\infty$. 

Let $$L=\{v\in \ZZ^n\vert  v\cdot p=0\}.$$
Note that $\Psi(z) = \Psi(z+v)$ for each $v\in L$. Translations by $L$ preserve graph structure on $\ZZ^n$ therefore the factor-graph $\ZZ^n/L$ (i.e. we say that $z$ is equivalent to $z'$ if and only if $z-z'\in L$) is well-defined. 

Note that if the images of $z,z'\in \ZZ^n$ are different in $\ZZ^n/L$ then $p\cdot z\ne p\cdot z'$. 

Note that $\Psi$ is $p$-monotone, see Definition~\ref{def_monotone}, so all $(\Psi)_k$ are $p$-monotone.

The function $\Psi$ descends to  $\ZZ^n/L$ and its deviation locus  on $\ZZ^n/L$ is a finite set, therefore $\sum_{z\in \ZZ^n/L} \Delta \Psi(z)=P$ is a finite number. 
\begin{lemma}
\label{lemma_periodichuskongs}
For all $k\in\ZZ_{>0}$ huskings $(\Psi)_k$ are $L$-periodic, i.e. for all $v\in L, z\in\ZZ^n$ $$(\Psi)_k(z)=(\Psi)_k(z+v).$$
\end{lemma}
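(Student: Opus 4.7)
The plan is to show that the $L$-periodicity of $\Psi$ propagates to $(\Psi)_k$ simply because $(\Psi)_k$ is defined by a pointwise-minimum characterization whose defining set $\Theta_k(\Psi)$ is invariant under the translation action of $L$. This is a standard ``symmetry of a uniquely characterized object'' argument.

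First I would fix $v\in L$ and consider the translation map $T_v:\ZZ^n\to\ZZ^n$, $T_v(z)=z+v$. Since $T_v$ is a graph automorphism of $\ZZ^n$, the Laplacian commutes with $T_v$: for any $G:\ZZ^n\to\ZZ$, $\Delta(G\circ T_v)(z)=(\Delta G)(z+v)$. Because $v\cdot p=0$, the function $\Psi$ is $T_v$-invariant: $\Psi(z+v)=\min(0,p\cdot z+p\cdot v)=\min(0,p\cdot z)=\Psi(z)$. Consequently $D(\Psi)$ is $T_v$-invariant as well.

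Next I would verify that $\Theta_k(\Psi)$ is closed under the action $G\mapsto G\circ T_v$. Pick any $G\in \Theta_k(\Psi)$. Then $\Delta(G\circ T_v)=(\Delta G)\circ T_v\leq 0$, and for each $z\in\ZZ^n$
\[
\Psi(z)-k=\Psi(z+v)-k\leq G(z+v)=(G\circ T_v)(z)\leq \Psi(z+v)=\Psi(z).
\]
Moreover, $\{G\circ T_v\ne\Psi\}=\{G\ne\Psi\}-v$, which lies in $B_C(D(\Psi))-v=B_C(D(\Psi)-v)=B_C(D(\Psi))$ by the $L$-invariance of $D(\Psi)$. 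Hence $G\circ T_v\in\Theta_k(\Psi)$, and the correspondence $G\leftrightarrow G\circ T_v$ is a bijection of $\Theta_k(\Psi)$ with itself (its inverse being $G\mapsto G\circ T_{-v}$).

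Finally, applying the defining formula
\[
(\Psi)_k(z)=\min\{G(z)\mid G\in\Theta_k(\Psi)\},
\]
I compute
\[
(\Psi)_k(z+v)=\min\{G(z+v)\mid G\in\Theta_k(\Psi)\}=\min\{(G\circ T_v)(z)\mid G\in\Theta_k(\Psi)\}=(\Psi)_k(z),
\]
where the last equality uses that $G\mapsto G\circ T_v$ permutes $\Theta_k(\Psi)$. This gives the desired $L$-periodicity. There is no real obstacle here; the only point requiring any care is checking that the finite-neighborhood condition in the definition of $\Theta_k$ is preserved under the translation, which works precisely because $D(\Psi)$ inherits $L$-periodicity from $\Psi$.
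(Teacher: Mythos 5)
Your proof is correct, but it takes a genuinely different route from the paper. The paper argues by contradiction: assuming $(\Psi)_k(z')>(\Psi)_k(z'+v)$ for some $v\in L$, it forms the function $z\mapsto\min\bigl((\Psi)_k(z),(\Psi)_k(z+v)\bigr)$, invokes Lemma~\ref{lem_minharmonic} to see that this minimum is again superharmonic, checks that it lies in $\Theta_k(\Psi)$, and contradicts the pointwise minimality of $(\Psi)_k$. That argument implicitly relies on $(\Psi)_k$ itself being an admissible competitor, i.e.\ on the membership $(\Psi)_k\in\Theta_k(\Psi)$ and on the uniform control of $\{\Psi\ne G\}$ coming from the holelessness of $\Psi$ (Example~\ref{ex_small}, Lemma~\ref{lem_finiteneigh}, Corollary~\ref{cor_finiteness}). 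You instead prove that the translation $G\mapsto G\circ T_v$ is a bijection of $\Theta_k(\Psi)$ onto itself --- using that $\Delta$ commutes with graph automorphisms and that $\Psi$ and hence $D(\Psi)$ are $L$-invariant --- and then read off the periodicity directly from the defining formula $(\Psi)_k(z)=\min\{G(z)\mid G\in\Theta_k(\Psi)\}$. Your symmetry argument is purely formal: it never needs $(\Psi)_k$ to belong to $\Theta_k(\Psi)$, nor any finiteness or holelessness input, and it would apply verbatim to any group of graph automorphisms preserving $\Psi$. The paper's route, on the other hand, reuses the $\min$-of-superharmonic-functions trick that recurs throughout the text and extends more readily to situations where one compares a function with a translate that does \emph{not} fix $\Psi$ (as in Proposition~\ref{cor_wavegp}). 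Both proofs are valid; yours is the cleaner one for this particular statement.
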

\begin{proof}
Suppose, to the contrary, that $(\Psi)_k(z')>(\Psi)_k(z'+v)$ for some $z'\in\ZZ^n, v'\in L$. It follows from Lemma~\ref{lem_minharmonic} that $F(z)=\min((\Psi)_k(z),(\Psi)_k(z+v'))$ belongs to $\Theta_k(\Psi)$, but 
$F(z')<(\Psi)_k(z')$ which contradicts to the minimality of $(\Psi)_k$ in $\Theta_k(\Psi)$.
\end{proof}
Therefore all $k$-huskings are also $L$-periodic, so it makes sense to consider $k$-huskings of $(\Psi)_k$ on $\ZZ^n/L$.

\begin{proof}[Proof of Theorem~\ref{th_stabilfn}]
Suppose that the sequence of $k$-huskings $(\Psi)_k$ of $\Psi$ do not stabilize as $k\to \infty$. Then the distance (in $\ZZ^n/L$) between the deviation locus $D((\Psi)_k)$ of $(\Psi)_n$ and $D(\Psi)$ grows as $k\to\infty$, i.e. the lengths of the interval  $$I_k= [\min (p\cdot z)\vert  z\in D((\Psi)_k)), \max (p\cdot z\vert  z\in D((\Psi)_k))] $$
can be arbitrary large. Denote $J_0=\{z\in\ZZ^n/L\vert  p\cdot z\in I_k\}$. 

It follows from Lemma~\ref{lemma_nabla} that $\sum_{z\in \ZZ^n/L} \Delta \Psi(z) = \sum_{z\in \ZZ^n/L} \Delta (\Psi)_k(z) = P$. Hence there exists a 
subinterval $I'\subset I_k$, of length at least $\vert I_k\vert /P$ such that $(\Psi)_k$ is harmonic on the set $J_1=\{z\vert  p\cdot z\in I'\}$.
It follows from Lemma~\ref{lemma_harmonic} that $(\Psi)_k$ is linear on $J_1$, because $(\Psi)_k$ on $J_1$ can not grow faster than $p\cdot z$ (because of $-p$ monotonicty of the function $(\Psi)_k-p\cdot z$). Note that $(\Psi)_k$ is $L$-periodic, therefore $(\Psi)_k$ is $cp\cdot z+d$ for some constants $c,d\in\RR$. These constants $c,d$ must be integer, because $(\Psi)_k$ is integer-valued.

We know that $(\Psi)_k$ is $p$-monotone, and $(\Psi)_k  = p\cdot z$ for $\{z\vert  p\cdot z<<0\}$ and $(\Psi)_k=0\cdot z$ for $\{z\vert p\cdot z>>0\}$, therefore $0\leq c\leq 1$. Indeed, if $c<0$ this would contract the $p$-monotonicity, and $c>1$ is prohibited by the same reasoning applied to the function $(\Psi)_k - p\cdot z$ which is $(-p)$-monotone.

Therefore $c=0$ or $c=1$. If $c=0$, then $(\Psi)_k\equiv d$ on $J_1$. Next, $d$ cannot be $0$ since in between of $J_1$ and the region $\{z\vert  p\cdot z>>0\}$ there exists a point with $\Delta (\Psi)_k<0$. If $d<0$ then, again, this contradicts to the $p$-monotonicity of $(\Psi)_k$ on the region from $J_1$ to $\{z\vert p\cdot z>>0\}$. The case $c=1$ is analogous. Thus we arrived to a contradiction, therefore there exists $N$ such that  for all $k\in \ZZ_{\geq 0}$ we have
 $$(\Psi)_N=(\Psi)_{N+k}.$$

Note that $(\Psi)_N$ coincides with $\Psi$ outside a finite neighborhood of $D(\Psi)$, because it is so in $\ZZ^n/L$.
 \end{proof}

\section{Proof of Theorem~\ref{th_main}}
Proposition~\ref{cor_wavegp} implies that for each primitive non-zero $p$ there exists a soliton. So we need to only prove that all solitons can be obtained in this way. Our plan is as follows: we send big number $k$ of waves from a point  far from a soliton and then deduce that the toppling function of this process is essentially the husking of $\min(k,p\cdot z+c)$ for suitable constants $k,c$.

Consider a movable hyperplane-shaped $L$-periodic state $\phi$. Suppose that we are sending waves from a point $z_0\in\ZZ^n$. Choose a primitive vector $p$ such that $p\cdot L=0$. Without loss of generality we may suppose that $D(\phi)\subset \{z\vert  -m<p\cdot z<m\}$ and $p\cdot z_0>>m$ for some $m\in\ZZ_{\geq 0}$.  As in the previous section, let us descend the sandpile states and wave topplings functions to the cylinder $\ZZ^n/L$. 

\begin{lemma}
In the above setting, sending a wave from a point $z_0$ causes no topplings in the set $\{z\vert  p\cdot z<<0\}$.
\end{lemma}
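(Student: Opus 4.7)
The plan is to apply the Least Action principle for waves, which characterises the toppling function $H^{z_0}$ as the pointwise minimal nonnegative integer-valued function $H$ satisfying $H(z_0)=1$ and $\phi+\Delta H\leq 2n-1$. Consequently, to prove the lemma it suffices to exhibit any such ``sub-wave'' function $\tilde H$ that already vanishes on $\{z\vert p\cdot z\ll 0\}$; then $H^{z_0}\leq \tilde H$ forces the same conclusion for $H^{z_0}$.

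The candidate comes from the movability hypothesis itself. Pick $z_1\in\ZZ^n$ and $q\in\ZZ^n\setminus\{0\}$ with $W^{z_1}\phi(z)=\phi(z+q)$, and let $H^{z_1}$ denote the associated toppling function, so $\Delta H^{z_1}(z)=\phi(z+q)-\phi(z)$ and automatically $\phi+\Delta H^{z_1}=\phi(\cdot+q)\leq 2n-1$. First I would show $H^{z_1}$ is $L$-periodic: since $\phi$ is $L$-periodic, so is $\Delta H^{z_1}$, hence for each $v\in L$ the difference $H^{z_1}(\cdot+v)-H^{z_1}$ is harmonic on the whole of $\ZZ^n$ and bounded by $1$ (as $H^{z_1}\in\{0,1\}$). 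Bounded integer-valued harmonic functions on $\ZZ^n$ are constant (Lemma~\ref{lemma_harmonic} applied on balls of arbitrary radius), and the additive, uniformly bounded map $v\mapsto H^{z_1}(\cdot+v)-H^{z_1}$ from $L$ to $\ZZ$ must then be identically zero.

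Thus $H^{z_1}$ descends via $k=p\cdot z$ to a function $h\colon\ZZ\to\{0,1\}$. Outside the slab, $\phi(z+q)=\phi(z)=2n-1$ gives $\Delta H^{z_1}=0$, and another application of Lemma~\ref{lemma_harmonic} forces $H^{z_1}$ to be constant on each of the far halves $\{p\cdot z\gg m\}$ and $\{p\cdot z\ll -m\}$, with respective limits $a,b\in\{0,1\}$. These limits cannot coincide: $a=b=0$ is incompatible with $H^{z_1}(z_1)=1$, while $a=b=1$ would make every vertex topple exactly once with no net change, contradicting the nontriviality built into movability. So $\{a,b\}=\{0,1\}$ and $H^{z_1}$ is a step function in $p\cdot z$.

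Provided the $1$-side of $H^{z_1}$ coincides with the positive side under our WLOG setup, we take $\tilde H=H^{z_1}$: it satisfies $\tilde H(z_0)=1$ and vanishes on $\{p\cdot z\ll 0\}$, and Least Action yields $H^{z_0}\leq H^{z_1}$, giving the lemma. I expect the main obstacle to be this alignment, since a priori the reference $z_1$ might sit on the opposite side of the slab from $z_0$, making $H^{z_1}(z_0)=0$ and blocking the direct comparison. This is handled by the symmetry of the conclusion under $p\mapsto -p$ (which swaps the two halves), together with the observation that the $1$-valued region of the reference wave is precisely the half from which the source emanates; by combining both orientations of the reference argument one covers waves sent from either side of the slab.
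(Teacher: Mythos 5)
Your strategy --- invoke the Least Action principle and exhibit an explicit competitor $\tilde H$ vanishing on $\{p\cdot z\ll 0\}$ --- is genuinely different from the paper's proof, which argues by contradiction: if the far negative side toppled, then after $k$ waves the toppling function $H^{kz_0}$ would equal $k$ on both deep halves and be bounded below by $k-C$ everywhere, while Lemma~\ref{lemma_path}, applied near a point where $\Delta H^{kz_0}<0$, would force a value below $k-C-1$. Your reduction to understanding the single movability wave $H^{z_1}$ is attractive, and the preliminary steps ($L$-periodicity of $H^{z_1}$, constancy on each deep half) are correct --- indeed easier than you make them, since a $\{0,1\}$-valued function harmonic at a point takes the same value at all neighbours of that point.

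There are, however, two genuine gaps. First, your exclusion of $a=b$ fails as stated: the source $z_1$ supplied by movability may lie inside the slab, so $a=b=0$ is compatible with $H^{z_1}(z_1)=1$; and $a=b=1$ only forces $H^{z_1}=1$ deep on both sides, leaving room for a nonempty set $\{H^{z_1}=0\}$ near the slab and hence a nonzero $\Delta H^{z_1}$, so there is no immediate contradiction with the nontriviality clause of movability. The dichotomy $\{a,b\}=\{0,1\}$ is in fact true, but it requires an actual argument, e.g.\ evaluating the first moment $\sum_{\ZZ^n/L}(p\cdot z)\,\Delta H^{z_1}(z)=\sum(p\cdot z)\bigl(\phi(z+q)-\phi(z)\bigr)$, which on one hand equals $-(p\cdot q)$ times the (strictly negative) total deficit of $\phi$ and on the other a positive multiple of $b-a$. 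Second, and more seriously, the alignment problem is not resolved by appealing to the symmetry $p\mapsto-p$: that symmetry merely renames the two halves, whereas the half on which $H^{z_1}\equiv 1$ is determined by the sign of $p\cdot q$, which is fixed by the movability datum and cannot be chosen. If that half is the one opposite to $z_0$, then $H^{z_1}(z_0)=0$ and the Least Action comparison is unavailable; nor can you pass to $1-H^{z_1}$, since $\phi+\Delta(1-H^{z_1})=2\phi-\phi(\cdot+q)$ exceeds $2n-1$ at points where $\phi=2n-1$ but $\phi(\cdot+q)<2n-1$, and such points exist precisely because the slab moves. Your claim that ``the $1$-valued region of the reference wave is precisely the half from which the source emanates'' is likewise unproved, and is not even meaningful when $z_1$ lies inside the slab. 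As written, the proposal therefore covers only one of the two possible orientations, and the harder half of the lemma remains open.
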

\begin{proof}
Suppose that there is a toppling in the set  $\{z\vert  p\cdot z<<0\}$. Then the whole region  $\{z\vert  p\cdot z<-m\}$ topples. For $k$ big enough send $k$ such waves from $z_0$. Denote the corresponding toppling function by $H^{kz_0}$. Then $H^{kz_0}$ is equal to $k$ in the region $\{z\vert  p\cdot z>>0\}$, $H^{kz_0}=k$ in $\{z\vert  p\cdot z<-m\}$, $\Delta H^{kz_0} \geq 0$ in $ \{z\vert  -m<p\cdot z<m\}$ and $\Delta H^{kz_0}\leq 0$ everywhere else. Let the deviation set of $(W^{z_0})^k\phi$ belong to the set $\{z\in \ZZ^n/L\vert  p\cdot z\in [Ck-m,Ck+m]\}$. Note that for each $z\in \ZZ^n/L$ we have $H^{kz_0}(z)\geq k- C$ because, in the above assumptions, during a wave number $l<k$ a vertex does not topple only if it belongs to $D((W^{z_0})^{l-1}(\phi))$, and the latter moves with some constant speed $C$ (because it is periodic and movable). Take a point $z_1\in \ZZ^n/L$ with $\Delta H^{kz_0}(z_1)<0$. It follows from Lemma~\ref{lemma_path} that there exists $z$ with $H^{kz_0}(z)<k-C-1$, because the set $\{z\vert  \Delta H^{kz_0}(z)>0\}$ is far from $z_1$. This is a contradiction.
\end{proof}

\begin{proof}[Proof of Theorem~\ref{th_main}] Using the notation of the previous lemma, let us send $k$ waves from $z_0$. Then, the toppling function $H^{kz_0}$ is a non-negiative pointwise minimal function $h$ such that $h(z_0)=k$ and $\phi+\Delta h\leq 2n-1$ everywhere. Consider $h$ as the function on $\ZZ^n/L$. It has an upper bound by $\min (n, p\cdot z+m)$. Therefore for big $k$ the toppling function $H^{kz_0}$ is linear in between of $D(\phi)$ and $D((W^{z_0})^k\phi)$, therefore by the same reasoning as in Theorem~\ref{th_stabilfn}, $H^{kz_0}$ is equal to $p\cdot z+c$ on one part of $\ZZ^n/L$ and to $k$ on another, and is a pointwise minimal superharmonic function with these properties. Hence $\{z\vert  \Delta(H^{kz_0})<2n-1$ coincides with a translation of $\psi$ for $\Psi(z)=\min(0,p\cdot z)$. This concludes the proof.

\end{proof}

\section{The case of planar polygon $A$}
Consider a set $A\subset\ZZ^n$ whose linear span has dimension two. Without loss of generality $0\in A$. Suppose that the intersection of the convex hull of $A$ with $\ZZ^n$ is $A$. Denote $\Psi(z)=\min_{p\in A}(p\cdot z + c_p)$ for some constants $c_p\in\ZZ$. Let $L=\{q\in \ZZ^n| \forall p\in A q\cdot p = 0\}$. Then $L$ is a lattice and $\Psi(z)$ is $L$-periodic. Thus we can consider the graph $\ZZ^n/L$. Since $A$ is a convex polygon we can order its vertices as $p_1,p_2\dots,p_m,p_{m+1}=p_1$.  Note that $m=3$ or $4$, since a convex polygon $A\subset \ZZ^2$ with bigger number of vertices in the lattice contains a lattice point except its vertices.

For each two subsequent vertices $p_i,p_{i+1}$ of $A$ we know that a husking of $$\Psi_i(z) = \min(p_i\cdot z+ c_{p_i},p_{i+1}\cdot z+ c_{p_{i+1}})$$ exists, we denote it by $\psi_{p_i,c_i,p_{i+1},c_{i+1}}$.

Consider

$$\tilde\Psi(z) =\min\limits_{i=1,\dots,m} (\psi_{p_i,c_i,p_{i+1},c_{i+1}}).$$
It follows from Lemma~\ref{lemma_stabi} that it is enough to prove that the sequence $(\tilde\Psi)_k$ of huskings  stabilises.

\begin{lemma}
\label{lemma_fin}
The set $D_1=\{z| (\tilde\Psi)(z)\ne (\tilde\Psi)_1(z)\}$ is finite.
\end{lemma}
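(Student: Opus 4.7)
The plan is to work modulo $L$: both $\tilde\Psi$ and $(\tilde\Psi)_1$ are $L$-periodic (by the argument of Lemma~\ref{lemma_periodichuskongs}), so it suffices to show that $D_1$ is finite in $\ZZ^n/L\simeq\ZZ^2$. In this quotient, the tropical corner locus of $\Psi$ is a star: a vertex $v^*$ together with $m\in\{3,4\}$ rays $A_1,\dots,A_m$, with $A_i$ dual to the edge $[p_i,p_{i+1}]$. A preliminary observation is that for $z$ with $\dist(z,v^*)$ sufficiently large there is a unique $i(z)$ such that $\tilde\Psi\equiv\psi_{i(z)}$ on a ball around $z$ whose radius grows linearly with $\dist(z,v^*)$: each $\psi_j$ agrees with $\Psi_j$ outside a fixed-width tube around the line of $A_j$, only one such tube is near $z$ when $z$ is far from $v^*$, and every other $\psi_k$ exceeds $\tilde\Psi$ by a linearly growing margin. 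Consequently $\tilde\Psi$ is holeless, and Lemma~\ref{lem_finiteneigh} gives $\{H\neq 0\}\subset B_C(D(\tilde\Psi))$ for $H:=\tilde\Psi-(\tilde\Psi)_1$.

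By Proposition~\ref{prop_slicingFn}, $H\in\{0,1\}$. Since $(\tilde\Psi)_1$ is superharmonic, $\Delta H\geq\Delta\tilde\Psi$, so $H$ is subharmonic on $\ZZ^n\setminus D(\tilde\Psi)$. If $z\notin D(\tilde\Psi)$ and $H(z)=1$, the $\{0,1\}$-valued subharmonic inequality forces $H\equiv 1$ on all neighbors of $z$ outside $D(\tilde\Psi)$, and iterating, on the entire connected component of $z$ in $\ZZ^n/L\setminus D(\tilde\Psi)$. Since $\{H=1\}\subset B_C(D(\tilde\Psi))$ this component must be bounded; but the complement of the thickened star in $\ZZ^2$ has only finitely many bounded components, all clustered near $v^*$. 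Thus $\{H=1\}\setminus D(\tilde\Psi)$ is finite.

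Bounding $\{H=1\}\cap D(\tilde\Psi)$ is the main obstacle. Assume for contradiction that there is $z_0\in D(\tilde\Psi)$ far from $v^*$ with $H(z_0)=1$, and set $i=i(z_0)$, so $\tilde\Psi\equiv\psi_i$ on a large ball around $z_0$ and $(\tilde\Psi)_1(z_0)=\psi_i(z_0)-1$. The goal is to construct $\bar G\in\Theta_1(\psi_i)$ with $\bar G(z_0)<\psi_i(z_0)$, contradicting $(\psi_i)_1=\psi_i$ — which is immediate from the canonical minimality of $\psi_i$ in $\bigcup_k\Theta_k(\Psi_i)$, since any $F\in\Theta_1(\psi_i)$ also lies in $\Theta_{N+1}(\Psi_i)$ for the $N$ with $\psi_i\in\Theta_N(\Psi_i)$, and hence $F\geq\psi_i$. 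The natural candidate is $\bar G:=(\tilde\Psi)_1$ on the wedge $T_i':=\{z:\dist(z,A_i)\leq\dist(z,A_j)$ for all $j\neq i\}$ and $\bar G:=\psi_i$ outside $T_i'$. Geometry gives $T_i'\subset T_i:=\{\tilde\Psi=\psi_i\}$ and $z_0\in T_i'$, hence $\psi_i-1\leq\bar G\leq\psi_i$; and on the portion of $\partial T_i'$ at distance larger than $C/\sin(\theta/2)$ from $v^*$ (with $\theta$ the smallest angle between consecutive rays), the bisectors constituting $\partial T_i'$ lie at distance $>C$ from every arm, so $(\tilde\Psi)_1=\tilde\Psi=\psi_i$ there, and superharmonicity of $\bar G$ across this portion follows from the routine inequality $\sum_{w'\sim w}\bar G(w')\leq\sum_{w'\sim w}\psi_i(w')\leq 2n\psi_i(w)$.

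The remaining technical point — and the most delicate step — is to handle the bounded portion of $\partial T_i'$ near $v^*$, where bisectors run close to arms and the transition $(\tilde\Psi)_1\to\psi_i$ may fail superharmonicity. Since any such failure is confined to a compact neighborhood of $v^*$, I would overwrite $\bar G$ by $\psi_i$ on a sufficiently large compact neighborhood $K$ of $v^*$ enclosing the problematic region; at $\partial K$ the same routine inequality restores superharmonicity, and the support condition $\{\bar G\neq\psi_i\}\subset B_{C'}(D(\psi_i))$ holds because $D(\tilde\Psi)\cap T_i'\subset D(\psi_i)$ away from $K$ and $K$ is compact (with arm $A_i$ passing through $v^*$, so $K\subset B_{\mathrm{diam}(K)}(D(\psi_i))$). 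Because $z_0$ is far from $v^*$, $z_0\notin K$ and the value $\bar G(z_0)=\psi_i(z_0)-1$ is preserved. This yields $\bar G\in\Theta_1(\psi_i)$ with $\bar G(z_0)<\psi_i(z_0)$, the desired contradiction, and the lemma follows.
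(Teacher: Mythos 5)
The first half of your argument — reduction to $\ZZ^n/L$, the observation that $H:=\tilde\Psi-(\tilde\Psi)_1\in\{0,1\}$ is subharmonic off $D(\tilde\Psi)$, hence constant equal to $1$ on any component of the complement that it meets — is sound, and you correctly isolate the real difficulty: ruling out $H=1$ at points arbitrarily far out along an arm $A_i$. But your resolution of that case has a genuine gap. The function $\bar G$ obtained by restricting $(\tilde\Psi)_1$ to the wedge $T_i'$ and patching with $\psi_i$ elsewhere fails superharmonicity at the interface, and overwriting by $\psi_i$ on a compact set $K$ near $v^*$ does not repair this: $\partial K$ must cross the strip $B_C(D(\psi_i))$ around the arm $A_i$ itself, and at a point $w\in T_i'\setminus K$ adjacent to $K$ with $(\tilde\Psi)_1(w)<\psi_i(w)$ and $\Delta(\tilde\Psi)_1(w)=0$, raising a neighbour from $(\tilde\Psi)_1(w')$ to $\psi_i(w')$ forces $\Delta\bar G(w)>0$. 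You cannot in general place the cut at a cross-section of the arm where $(\tilde\Psi)_1\equiv\psi_i$, because under the contradiction hypothesis $\{H=1\}$ may meet every cross-section between $v^*$ and $z_0$. (A secondary slip: a connected component of $\ZZ^n/L\setminus D(\tilde\Psi)$ contained in $B_C(D(\tilde\Psi))$ need not be bounded — it can be an infinite gap between parallel strings of $D(\psi_i)$ running along an arm — so that branch of your first step also collapses into the same unresolved case rather than being finite outright.)

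The missing ingredient, and the way the paper actually closes the argument, is periodicity plus pigeonhole. Along the arm $A_i$ the soliton $\psi_i$ is periodic, so the strip around $A_i$ is tiled by translates of a fundamental domain, and the restriction of $H$ to each translate is one of finitely many $\{0,1\}$-valued functions. If $\{H=1\}$ reaches arbitrarily far along $A_i$, two distant translates carry identical restrictions of $(\tilde\Psi)_1-\psi_i$; copying the block of $(\tilde\Psi)_1$ between them periodically along the whole line supporting $D(\psi_i)$ yields a superharmonic function in $\Theta_1(\psi_i)$ strictly below $\psi_i$ somewhere — the seams match exactly by construction, so no gluing inequality is needed — contradicting the minimality of the stabilised husking $\psi_i$. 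Your wedge construction avoids neither the seam problem nor the need for this pigeonhole step, so as written the proof does not go through.
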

\begin{proof}
Since all functions $(\tilde\Psi)_k$ are periodic with respect to $L$, we can descend these functions to the factor-graph $G=\ZZ^n/L$. The graph $G$ is essentially two dimensional, in the sense that we can choose coordinates on it such that only two coordinates $x_1,x_2$ take any integer value and all other coordinates are bounded. 
Suppose $D_1$ is not finite. Note that the deviation set $D(\tilde\Psi)$ is contained in a finite neighborhood of the corner locus of the function $\Psi(z)$ (seeing as a function $\Psi:\RR^n\to\RR$, the corner locus of such a piece-wise linear function is the set of points where it is not smooth). In $G$ this finite neighborhood looks as a finite neighborhood of three or four rays $R_i, i=1,\dots, m$ in the plane $x_1,x_2$. The set $D_1$ belongs to $1$-neighborhood of $D(\tilde\Psi)$, so if it is infinite, then $D_1$ can be found arbitrary far from the origin. Each of $\psi_{p_i,c_i,p_{i+1},c_{i+1}}$ is periodic, so we can find two parts of $\psi_{p_i,c_i,p_{i+1},c_{i+1}}$ where $D_1$ looks identically. Then, similar to Lemma 9.6 in \cite{us_solitons}, by taking $(\tilde\Psi)_1(z)$ in between of these two parts and copying it along $\psi_{p_i,c_i,p_{i+1},c_{i+1}}$ we obtain a function which belongs to $\Theta_1(\psi_{p_i,c_i,p_{i+1},c_{i+1}})$ and not equal to it, which contradicts to the definition of $\psi_{p_i,c_i,p_{i+1},c_{i+1}}$. So $D_1$ can not be infinite.
\end{proof}

\begin{theorem}
\label{th_two}
Suppose that $A$ is above. Then the sequence of huskings $(\tilde\Psi)_k$ stabilises as $k\to \infty$.
\end{theorem}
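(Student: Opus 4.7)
I would work throughout in the factor graph $G=\ZZ^n/L$, which, as observed in the proof of Lemma~\ref{lemma_fin}, is essentially two-dimensional, and in which $D(\tilde\Psi)$ is contained in a bounded tubular neighborhood of the union of $m\in\{3,4\}$ rays $R_1,\dots,R_m$ emanating from a central region (the corner locus of $\tilde\Psi$ viewed as a piecewise linear function on $\RR^n/L_{\RR}$). The whole strategy is to promote Lemma~\ref{lemma_fin} (finiteness of $D_1=\{\tilde\Psi\ne(\tilde\Psi)_1\}$) to the uniform statement that there exists a single finite set $F_0\subset G$ with $D_k:=\{\tilde\Psi\ne(\tilde\Psi)_k\}\subset F_0$ for every $k\ge 1$. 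Once this key claim is established, the theorem follows from the minimum principle: $(\tilde\Psi)_k$ is superharmonic on $G$ and equals $\tilde\Psi$ on the exterior boundary of $F_0$, so on each connected component of $F_0$ the value of $(\tilde\Psi)_k$ is bounded below by $M_0:=\min\tilde\Psi$ taken over the finite exterior boundary of $F_0$. Thus $(\tilde\Psi)_k|_{F_0}$ is a decreasing sequence of integer-valued functions on a finite set, bounded below, and hence eventually constant; outside $F_0$ the sequence is already equal to $\tilde\Psi$ for all $k$, so $(\tilde\Psi)_k$ stabilises globally.

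\textbf{Proof of the key claim.} I would run the transplantation argument of Lemma~\ref{lemma_fin} for arbitrary $k$. Suppose for contradiction that for some $k$ the set $D_k$ contains points arbitrarily far from the origin of $G$. By Lemma~\ref{lem_finiteneigh} one has $D_k\subset B_{\max(k,C)}(D(\tilde\Psi))$, so unboundedness forces an unbounded subset of $D_k$ inside a fixed tubular neighborhood of some ray, say $R_1$. Far from the center along $R_1$, $\tilde\Psi$ coincides with the fully husked edge function $\psi_1:=\psi_{p_1,c_1,p_2,c_2}$, which is invariant modulo the linear drift $\alpha:=p_1\cdot v_1=p_2\cdot v_1$ under translation by the primitive direction vector $v_1$ of $R_1$. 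The deviation $T(z):=\tilde\Psi(z)-(\tilde\Psi)_k(z)$ is supported in this tube, takes values in $\{0,1,\dots,k\}$, and by assumption has unbounded support along $R_1$. Restricting $T$ to windows of a fixed large radius around points on $R_1$ yields finitely many possible window patterns, so by pigeonhole one finds two points $z^{(0)},z^{(1)}=z^{(0)}+\ell v_1\in R_1$, both far from the center and from each other, around which the window patterns of $T$, and therefore of $(\tilde\Psi)_k$ (since $\tilde\Psi$ itself is $v_1$-drift-invariant in this region), coincide. Following the recipe of Lemma~9.6 in \cite{us_solitons}, one constructs $F^*$ by gluing the drift-shifted restriction of $(\tilde\Psi)_k$ on the strip between $z^{(0)}$ and $z^{(1)}$ to $\psi_1$ outside; the matching of local profiles at the two seams, together with the $v_1$-drift-invariance of $\psi_1$, makes $F^*$ superharmonic and strictly smaller than $\psi_1$ somewhere, while coinciding with $\psi_1$ outside a bounded set. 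Thus $F^*\in\Theta_{k'}(\Psi_1)$ for some large $k'$, contradicting the minimality of $\psi_1$ as the canonical husking of $\Psi_1=\min(p_1\cdot z+c_1,\ p_2\cdot z+c_2)$.

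\textbf{Main obstacle.} The delicate technical point is the verification of superharmonicity of $F^*$ at the two seams. This forces the pigeonhole window to have radius exceeding the range of $\Delta$, and to match the full local profile of $(\tilde\Psi)_k$ around $z^{(0)}$ and $z^{(1)}$ rather than merely the values of $T$; the fact that $\tilde\Psi$ is drift-periodic along $R_1$ outside the center turns matching of $T$ on a window into matching of $(\tilde\Psi)_k$ on that window, so a sufficiently large radius suffices. This is essentially the content of Lemma~9.6 of \cite{us_solitons}, transported to the cylinder $\ZZ^n/L$, and I expect that the argument carries over verbatim since the two-dimensional geometry of the deviation locus in $G$ is identical to the planar case of \cite{us_solitons}.
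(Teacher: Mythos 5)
Your reduction of the theorem to the ``key claim'' --- that there is a \emph{single} finite set $F_0$ with $D_k\subset F_0$ for \emph{all} $k$ --- is logically sound (the minimum-principle conclusion from it is fine), but the key claim is essentially equivalent to the theorem itself, and your argument for it does not establish it. What your pigeonhole/transplantation argument proves is that \emph{each individual} $D_k$ is finite: since the deviation $T=\tilde\Psi-(\tilde\Psi)_k$ takes values in $\{0,\dots,k\}$, the number of possible window patterns on a window of fixed radius is of order $(k+1)^{|\mathrm{window}|}$, so the distance along the ray at which two matching windows must appear grows with $k$. You therefore obtain $D_k\subset B_{\rho(k)}(0)$ with $\rho(k)$ depending on $k$, not a uniform $F_0$. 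The gap is fatal for your strategy: under the contradiction hypothesis that the huskings do not stabilise, the sets $D_k$ genuinely do grow without bound --- the paper shows $B_{rk}(0)\subset D_k$ for some $r>0$, using the existence (via Corollary~\ref{cor_1husking} and Lemma~\ref{lemma_fin}) of a vertex $v_0$ with $(\tilde\Psi)_k(v_0)=\tilde\Psi(v_0)-k$ together with the monotonicity of husking --- so no uniform $F_0$ can be exhibited without already knowing the theorem.

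The ingredient you are missing is the mechanism the paper actually uses to derive the contradiction once both growth rates are in hand. From the at-most-linear upper bound $D_k\subset B_{Rk}(0)$ (which your per-$k$ transplantation argument, applied as in Lemma~\ref{lemma_fin} to bound the increment from $D_m$ to $D_{m+1}$, does give) and the at-least-linear lower bound $B_{rk}(0)\subset D_k$, one uses Lemma~\ref{lemma_nabla} to bound the total mass $\sum|\Delta(\tilde\Psi)_k|$ over $B_{Rk}(0)$ by $Ck$; hence inside $B_{rk}(0)$ there is a ball of radius growing with $k$ on which $(\tilde\Psi)_k$ is harmonic. Lemma~\ref{lemma_harmonic} (integer-valued harmonic functions of sublinear growth are linear) then forces $(\tilde\Psi)_k$ to be linear there, and the monotonicity properties preserved by husking (Corollary~\ref{cor_monotf}) force its slope to be a lattice point of the convex hull of $A$ that is not a vertex --- contradicting the hypothesis on $A$. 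This harmonicity-and-rigidity step is where the hypothesis that the convex hull of $A$ contains no interior lattice points enters; your proposal never uses that hypothesis, which is a strong sign that the argument as written cannot be complete.
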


\begin{proof}
Suppose that the sequence $(\tilde\Psi)_k$ does not stabilises. We descend all objects to $G=\ZZ^n/L$ as in Lemma~\ref{lemma_fin}. It follows from Corollary~\ref{cor_1husking} that there exists a vertex $v_0$ such that $(\tilde\Psi)_k(v_0) = (\tilde\Psi)(v_0) -k$ for each $k\geq 1$. Indeed, such vertex exists for every $k$ and to show that one can choose one vertex $v_0$ suitable for all $k$ it is enough to show that the set $\{z| (\tilde\Psi)(z)\ne (\tilde\Psi)_1(z)\}$ is finite, which is so according to Lemma~\ref{lemma_fin}. Then we consider the sets $D_m = \{z| (\tilde\Psi)(z)\ne (\tilde\Psi)_m(z)\}$. We will show that $D_m$ grows at least linearly and at most linearly by $m$. 

Indeed, Lemma~\ref{lemma_fin} implies that the difference between $D_m, D_{m+1}$ is at most constant, since they propagate along the rays in $\ZZ^n/L$, so $D_m$ grows at most linearly by $m$, so for some $R$ we have $D_m\subset B_{Rm}(0)$ for all $m$. On the other hand, for each direction $w\in\ZZ^n$ we can find a linear function $l(z)$ such that $f(z)+l(z)$ is monotone in the direction $w$. Adding a linear function does not change the process of husking (Remark~\ref{rem_linear}). But since $(\tilde\Psi)_k(v_0) = (\tilde\Psi)(v_0) -k$, then $(\tilde\Psi)_k(v_0+w) \leq \min(\tilde\Psi(v_0+w), \tilde\Psi(v_0) -k)$. This proves that $D_m$ grows at least linearly in $m$, for some $r>0$ we have $B_{rm}(0)\subset D_m$.

Note that $\sum_{z\in B_{Rk}(0)} \Delta (\tilde\Psi)_k(z) = \sum_{z\in B_{Rk}(0)} \Delta (\tilde\Psi)(z)\leq Ck$ for some $k$.

It follows from Lemma~\ref{lemma_nabla} that $\sum_{z\in B_{rk}(0)} |\Delta (\tilde\Psi)_k(z)|\leq Ck $ and so one can find a large ball in $B_{rk}(0)$ where $(\tilde\Psi)_k$ is harmonic and so is linear by Lemma~\ref{lemma_harmonic}. By repeating the arguments from the last part of the proof of Theorem~\ref{th_stabilfn} we see that from the monotonicity it follows that the slope of this linear function must be in the convex hull of $A$, but the latter contains no lattice points except the vertices of $A$. So, we arrived to a contradiction, hence the sequence of huskings $(\tilde\Psi)_k$ eventually stabilises. 
\end{proof}

Note that the soliton is periodic with respect to $L$. Let us fix a fundamental domain of the action of $L$ on it, it looks as (1) on Figure~\ref{fig_2}: a soliton belongs to a finite neighborhood of a hyperplaneplane, and images of the fundamental domain under the action of $L$ tiles this neighborhood. 

\section{The case of three dimensional $A$}

Let $n=3$.
Suppose that the dimension of the linear span of $A\subset \ZZ^3$ is three, so the convex hull of $A$ is a lattice polytope. Suppose that the convex hull of $A$ contains no lattice points except vertices of $A$. 

Consider a function $\Psi:\ZZ^3\to \ZZ,$

$$\Psi(z) = \min_{p\in A}(p\cdot z+ c_p), c_p\in \ZZ.$$

For each face $F$ of $A$ we may consider the function
$$\Psi_F(z) = \min_{p\in F}(p\cdot z+ c_p),$$
and from the previous section we know that the sequence of its huskings stabilises on a function $\psi_F(z)$.

Recall that a tropical hypersurface is the set of non-linearity of a function $\min_{p}(p\cdot z+ c_p)$ where $p$ runs by a finite subset of $\ZZ^n, c_p\in\RR$. Figure~\ref{fig_2} represents a typical neighborhood of a vertex in a tropical hypersurface.

\begin{figure}
\includegraphics{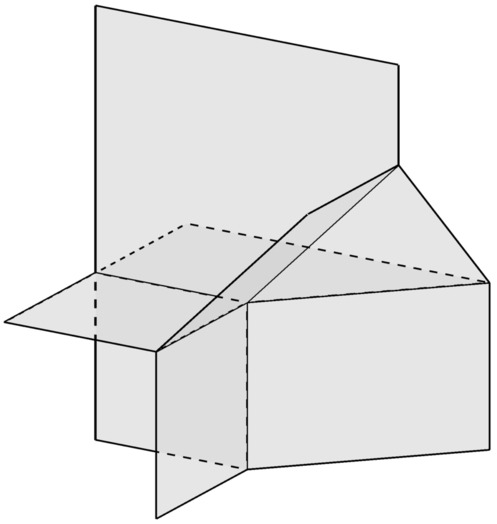}
\includegraphics[scale=0.17]{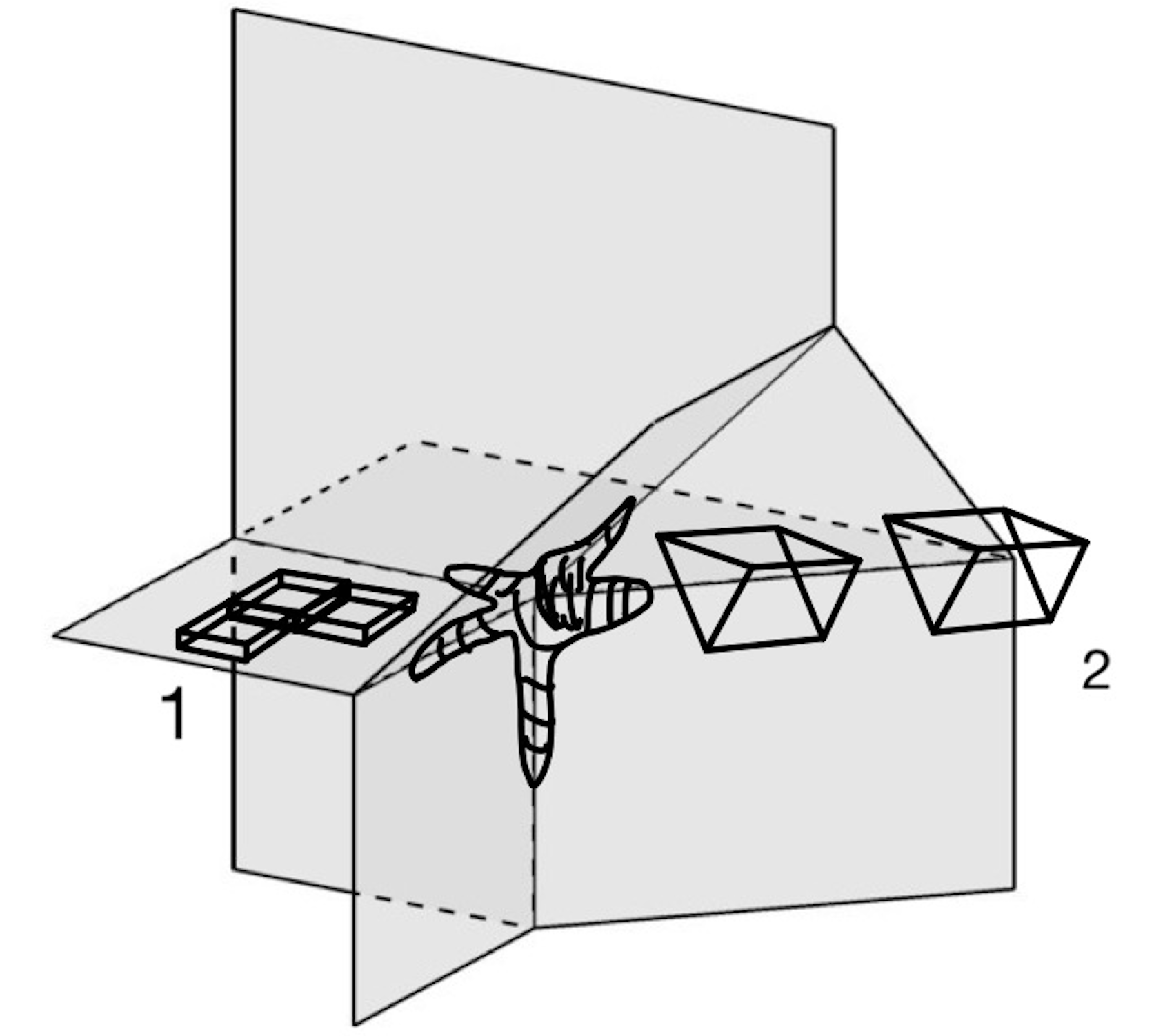}
\caption{Left: An example of a neighborhood of a vertex in a tropical hypersurface, courtesy of \cite{renaudineau2017tropical}. Right: A schematic picture of periodic fundamental domains for solitons (1), fundamental domains along edges (2), and an example of the support of $\tilde\Psi-(\tilde\Psi)_k$, which propagated from the vertex along the edges and faces of a tropical hypersurface. The support of $\tilde\Psi-(\tilde\Psi)_k$ is pictures as a blot. }
\label{fig_2}
\end{figure}

Define the function $\tilde \Psi(z) = \min_F(\psi_F)$ where we $F$ runs by all faces of the convex hull of $A$. As in the previous section, in order to prove stabilisation of huskings of $\Psi$ it is enough to prove the stabilisation of the huskings of $\tilde \Psi$.

\begin{theorem}
The sequence of the huskings $(\tilde \Psi)_k$ stabilised as $k\to\infty$.
\end{theorem}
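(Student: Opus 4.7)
The plan is to adapt the strategy of Theorem~\ref{th_two}: assume that $(\tilde\Psi)_k$ does not stabilise, show that the support $D_k=\{z\vert (\tilde\Psi)_k(z)\ne\tilde\Psi(z)\}$ grows at most linearly and at least linearly in $k$ around some fixed vertex, and then exhibit a large sub-ball where $(\tilde\Psi)_k$ is harmonic, hence linear with integer slope by Lemma~\ref{lemma_harmonic}; monotonicity combined with the hypothesis that $A$ contains no lattice points other than its vertices yields a contradiction. Since $\dim\mathrm{span}(A)=3$, the periodicity lattice $L=\{q\in\ZZ^3\vert q\cdot p=0\ \forall p\in A\}$ is trivial, so we work directly in $\ZZ^3$. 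The deviation locus $D(\tilde\Psi)$ lies in a finite neighborhood of the corner locus of $\Psi$ viewed as a function $\RR^3\to\RR$; this corner locus is a tropical surface dual to the convex hull of $A$, with a unique vertex $v_\star$, edges $E_F$ emanating from $v_\star$ (one per 2-face $F$ of $A$), and unbounded 2-dimensional faces $\Phi_e$ (one per edge $e$ of $A$). Away from $v_\star$, $\tilde\Psi$ agrees with a stabilised face husking $\psi_F$ along $E_F$ and with a stabilised 1-dimensional soliton of direction dual to $e$ across $\Phi_e$.

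The first step, the analogue of Lemma~\ref{lemma_fin}, is to show that $D_1=\{z\vert \tilde\Psi(z)\ne(\tilde\Psi)_1(z)\}$ is bounded. If not, it accumulates along some $E_F$ or some $\Phi_e$. Using the $L_F$-periodicity of $\psi_F$ and the $L_e$-periodicity of the edge soliton, one splices a period of $(\tilde\Psi)_1$ taken from the accumulation region into $\psi_F$ or into the edge soliton and produces an element of $\Theta_1(\psi_F)$ (respectively $\Theta_1$ of the edge soliton) strictly smaller than the model, exactly as in Lemma~\ref{lemma_fin} and Lemma~9.6 of \cite{us_solitons}, contradicting the defining minimality of those stabilised objects. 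Hence $D_1$ is finite and, by Corollary~\ref{cor_1husking} combined with the finiteness of $D_1$, there is a single $v_0\in\ZZ^3$ (independent of $k$) with $(\tilde\Psi)_k(v_0)=\tilde\Psi(v_0)-k$ for all $k\geq 1$.

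The main obstacle, and the genuine novelty relative to the two-dimensional case, is the upper bound $D_k\subset B_{Rk}(v_\star)$. The idea is to propagate the boundedness of $D_1$ inductively: going from step $k$ to $k{+}1$, the difference $(\tilde\Psi)_k-(\tilde\Psi)_{k+1}\in\{0,1\}$ (Proposition~\ref{prop_slicingFn}) can only be supported inside a bounded neighborhood of the front of $D_k$, because sufficiently far out $(\tilde\Psi)_k$ coincides with the already-stabilised $\psi_F$ or with the edge soliton, and applying the splicing argument of the previous paragraph to those stabilised objects forbids any further husking there. Iterating gives $D_k\subset B_{Rk}(v_\star)$ for a constant $R$ depending on $A$ alone. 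The matching lower bound $B_{rk}(v_0)\cap D_k\neq\emptyset$ follows in every direction $w\in\ZZ^3$ from the trick of Theorem~\ref{th_two}: by Remark~\ref{rem_linear} one subtracts a linear function to make $(\tilde\Psi)_k$ monotone in direction $w$, and then $(\tilde\Psi)_k(v_0+jw)\leq\min\bigl(\tilde\Psi(v_0+jw),\tilde\Psi(v_0)-k\bigr)$ forces a deviation within distance $O(k)$ of $v_0$.

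With $D_k$ confined to $B_{Rk}(v_\star)$, Lemma~\ref{lemma_nabla} gives $\sum_{z\in B_{Rk}(v_\star)}|\Delta(\tilde\Psi)_k(z)|\leq Ck^2$, since this sum equals the corresponding sum for $\tilde\Psi$ and the latter is controlled by the surface area of the tropical surface inside $B_{Rk}$. Hence inside $D_k$ one can find a ball of radius comparable to $k$ on which $(\tilde\Psi)_k$ is harmonic, and Lemma~\ref{lemma_harmonic} forces $(\tilde\Psi)_k$ to be linear there with integer slope $s\in\ZZ^3$. Monotonicity (Corollary~\ref{cor_monotf}) in the directions dual to the 2-faces of $A$ confines $s$ to the convex hull of $A$, so by hypothesis $s$ must be a vertex of $A$. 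But then, just as in the last step of Theorem~\ref{th_stabilfn} and Theorem~\ref{th_two}, the relation $(\tilde\Psi)_k(z)=s\cdot z+c_s$ on this ball, propagated by $s$-monotonicity, contradicts $(\tilde\Psi)_k(v_0)=\tilde\Psi(v_0)-k$ for $k$ sufficiently large. This contradiction forces the sequence $(\tilde\Psi)_k$ to stabilise.
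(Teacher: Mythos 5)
Your proposal follows essentially the same route as the paper's proof: finiteness of $D_1$ via the fundamental-domain pigeonhole/splicing argument against the stabilised face huskings $\psi_F$ and the periodic edge solitons, the linear upper bound $D_k\subset B_{Rk}$ obtained by iterating that argument together with the linear lower bound from monotonicity and the vertex $v_0$ with $(\tilde\Psi)_k(v_0)=\tilde\Psi(v_0)-k$, and finally the harmonic-ball/integer-slope contradiction via Lemma~\ref{lemma_nabla} and Lemma~\ref{lemma_harmonic}. You are in fact slightly more careful than the paper at two points --- the Laplacian mass in $B_{Rk}$ is indeed of order $k^2$ rather than $k$, and you explicitly dispatch the case where the slope of the linear piece is a vertex of $A$ --- while sharing the paper's unexamined step that the $O(k^2)$ count of non-harmonic points yields a harmonic ball of radius comparable to $k$.
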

\begin{proof}
The general schema of the proof is the same, except one new idea. First we need to prove that $D_m=\{z| \tilde \Psi(z)\ne (\tilde \Psi)_m(z)\}$ grows at least linearly in $m$. A proof repeats the beginning of the proof of Theorem~\ref{th_two}: we find a vertex $v_0$ such that $(\tilde \Psi)_m(v_0) = \tilde \Psi(z)-m$ and then use the fact that husking preserves monotonicity. Thus there exists $r>0$ such that $B_{rm}(0)\subset D_m$.
 
A fact that $D_m$ grows at most linearly in $m$ needs finer arguments as follows. As we know, the deviation set of $\tilde \Psi$ consists of solitons (corresponding to the edges of the convex hull of $A$, they live near faces of the corresponding function, see Figure~\ref{fig_2}). Note that each of  these solitons is periodic with respect to a two-dimensional lattice. Thus the fundamental domain with respect to this periodicity has a finite diameter. Let $c$ be the maximum of these diameters for all solitons corresponding to the edges of the convex hull of $A$. 

Let $X\subset \ZZ^3$ be the set of vertices where $\tilde\Psi$ is not equal to none of $\psi_{p,c_p,q,c_q}$ where $pq$ runs by all edges of the convex hull of $A$.  Then there exists a constant $C$ such that if $v\in D_1$ then the distance between $v$ and $X$ is at most $Cc$. Indeed, suppose the contrary. Consider the fundamental domain of the corresponding soliton  where $v$ belongs to. Consider the neighbouring copies the fundamental domains and the restriction of $(\tilde\Psi)_1-\tilde\Psi$ on it. Then we take their neighbouring domains etc. Each time we look at the restriction of $(\tilde\Psi)_1-\tilde\Psi$ on the new copies of the fundamental domains (see (1) on Figure~\ref{fig_2}), we call it $f_1,f_2,f_3,\dots$. If at some step no new functions appeared, i.e. the restriction of $(\tilde\Psi)_1-\tilde\Psi$ on the next belt of fundamental domains coincide with previously constructed $f_1,\dots, f_l$, then we take $\min(f_i)$ and prolong it periodically to a function $f$. Thus we obtain a function $\psi_{p,c_p,q,c_q}+f$ which belongs to $\Theta_1(\psi_{p,c_p,q,c_q})$ and less than $\psi_{p,c_p,q,c_q}$ this contradicts to the minimality of $\psi_{p,c_p,q,c_q}$. Thus, the distance between $X$ and $v$ is a most $c$ times ``the number of $\{-1,0\}$-valued functions of the fundamental domain for this soliton''.

Note that $X$ belongs to the finite neighborhood of the rays (corresponding to the faces of the convex hull of $A$) of the corner locus of $\Psi:\RR^3\to\RR$. So if $D_1$ is infinite, then it prolongs in a finite neighborhood of $X$, so in a finite neighborhood of these rays. Again, if we find two identical pieces of $D_1$ (such as (2) in Figure~\ref{fig_2}) along such a ray (corresponding to a face $F$ of the convex hull of $A$), we would decrease the stabilised husking of $\Theta_F$, which is not possible. So $D_1$ is finite and similarly the distance between $D_m$ and $D_{m+1}$ is a priory bounded and so $D_m$ grows at most linearly by $m$, so there exists $R$ such that $D_m\subset B_{Rm}(0)$.

Then, using the estimate for the laplacian, we see that the number of vertices in $B_{rm}(0)$, where $(\tilde\Psi)_k$ is not harmonic, is linear by $k$, so one can find a large ball in it where $(\tilde\Psi)_k$ is harmonic, and so linear. Finally, the slope (thought of a lattice point) of that discovered linear function would belong to the convex hull of $A$ because of monotonicity properties of husking procedure, but we know that $A$ contains no such points in its convex hull. So we arrived to a contradiction and thus finished the proof.
\end{proof}

\section{General case}

\begin{theorem} Let $A\subset \ZZ^n$ be a finite set. Suppose that $A$ coincides with the set of vertices of the convex hull of $A$ in $\RR^n$. Suppose that the intersection of the convex hull of $A$ with $\ZZ^n$ consists only of $A$. Define

$$\Psi(z) = \min_{p\in A} (p\cdot z+ c_p),$$

where $c_p$ are arbitrary integer numbers. Then the sequence $(\Psi)_k$ of huskings of $\Psi$ eventually stabilises, i.e. there exists $N$ such that for all $k>N$ we have $(\Psi)_k=(\Psi)_N$.
\end{theorem}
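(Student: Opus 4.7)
Proceed by induction on the dimension $d$ of the affine span of $A$, with base cases $d=1$ (Theorem~\ref{th_stabilfn} after shifting by a linear function, Remark~\ref{rem_linear}) and $d=2$ (Theorem~\ref{th_two}). For the inductive step, every proper face $F$ of $\mathrm{conv}(A)$ inherits the hypothesis on $A$, so by induction the function $\Psi_F(z)=\min_{p\in F\cap\ZZ^n}(p\cdot z+c_p)$ admits a canonical husking $\psi_F$. Define the partial husking
$$\tilde\Psi(z)=\min_F \psi_F(z),$$
the minimum ranging over all proper faces $F$. By Lemma~\ref{lemma_stabi} it suffices to show that $(\tilde\Psi)_k$ stabilizes. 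Write $D_m=\{z:(\tilde\Psi)_m(z)\ne\tilde\Psi(z)\}$. The point of $\tilde\Psi$ is that its deviation set already incorporates the edge-solitons and all lower-dimensional face-interactions of $\Psi$; the only remaining huskings must occur near the codimension-$d$ vertex skeleton of the corner locus of $\Psi$.

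\textbf{Sandwiching $D_m$ between two linearly growing balls.} I would first prove $B_{rk}(0)\subset D_k\subset B_{Rk}(0)$ for constants $0<r<R$ and all sufficiently large $k$. For the lower inclusion, Corollary~\ref{cor_1husking} furnishes a point $v_0$ with $(\tilde\Psi)_k(v_0)=\tilde\Psi(v_0)-k$ for every $k$ (given that $D_1$ is finite, see the next paragraph), and the $e$-monotonicity of a suitable linear shift of $\tilde\Psi$ (Remark~\ref{rem_linear}), preserved by Corollary~\ref{cor_monotf}, then propagates this defect along a basis of directions to fill $B_{rk}(0)$. For the upper inclusion, once $D_1$ is finite, Proposition~\ref{prop_slicingFn} and Lemma~\ref{lem_finiteneigh} together yield $D_{k+1}\subset B_C(D_k)$ with $C$ independent of $k$, hence the linear upper bound.

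\textbf{Main obstacle: finiteness of $D_1$.} I expect the technical crux to be the stratified periodic-extraction argument guaranteeing $D_1$ is finite, mirroring the three-dimensional proof. Along each proper face $F$ the function $\psi_F$ is periodic with respect to the rank $n-\dim F$ lattice $L_F=\{q\in\ZZ^n:q\cdot(p-p')=0\ \forall p,p'\in F\cap\ZZ^n\}$. If $D_1$ extended unboundedly along such a face, the finitely many local types of the bounded function $(\tilde\Psi)_1-\tilde\Psi$ on fundamental domains would have to repeat by pigeonhole; splicing along the repetition yields an $L_F$-periodic element of $\Theta_1(\psi_F)$ strictly smaller than $\psi_F$, contradicting the minimality of $\psi_F$ in $\bigcup_k\Theta_k(\Psi_F)$. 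The new feature relative to dimension three is that this extraction must be performed face by face, moving from top-dimensional faces of $\mathrm{conv}(A)$ inward, each stratum invoking the induction hypothesis to supply the appropriate periodicity.

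\textbf{Conclusion via harmonicity and monotonicity.} With $D_m$ sandwiched linearly, Lemma~\ref{lemma_nabla} applied to $B_{Rk}(0)$ gives $\sum_{z\in B_{Rk}(0)}|\Delta(\tilde\Psi)_k(z)|=O(k)$, so a pigeonhole on concentric balls inside $B_{rk}(0)$ produces a sub-ball of radius tending to infinity with $k$ on which $(\tilde\Psi)_k$ is harmonic. Lemma~\ref{lemma_harmonic} then forces $(\tilde\Psi)_k$ to be linear on a still smaller ball, with integer slope $q\in\ZZ^n$. The $e$-monotonicity from the previous step, applied for every $e$, constrains $q$ to lie in $\mathrm{conv}(A)\cap\ZZ^n$, which by hypothesis equals $A$. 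Hence $q$ is a vertex $p\in A$ and the linear function in question equals $p\cdot z+c_p=\tilde\Psi$ on the sub-ball; but the sub-ball lies in $D_k$, where $(\tilde\Psi)_k<\tilde\Psi$, a contradiction. Therefore $(\tilde\Psi)_k$ stabilizes, and so does $(\Psi)_k$ by Lemma~\ref{lemma_stabi}, closing the induction.
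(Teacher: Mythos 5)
Your proposal follows essentially the same route as the paper's own proof: induction over the faces of $\mathrm{conv}(A)$ via the partial husking $\tilde\Psi=\min_F\psi_F$, finiteness of $D_1$ by the periodic pigeonhole/extraction argument on fundamental domains carried out stratum by stratum, linear sandwiching $B_{rk}(0)\subset D_k\subset B_{Rk}(0)$, and the harmonic-ball/linearity/monotonicity endgame. The one spot where you are slightly loose is in excluding vertex slopes at the very end (a linear piece with slope $p\in A$ need not have constant term $c_p$, so it need not equal $\tilde\Psi$ on the sub-ball); the paper glosses the same point, and the complete argument is the constant-term analysis from the proof of Theorem~\ref{th_stabilfn}.
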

\begin{proof} Our proof combines ideas which we used in the partial cases above. Without loss of generality we may suppose that $0\in A$. First, if the dimension of the linear span of $A$ is less than $n$, then instead of $\ZZ^n$ we consider the graph $\ZZ^n/L$ where $L = \{q\in\ZZ^n|q\cdot p=0, \forall p\in A \}$. Then we choose coordinates in $\ZZ^n/L$ and the number of coordinates which may be infinite is equal to the dimension of the linear span of $A$ (all the same as in our proof of Theorem~\ref{th_stabilfn}). So, for simplicity of exposition we assume that the dimension of the linear span of $A$ is $n$.

Then we proceed by induction. We already proved stabilisation of huskings for $n=1,2,3$ and now we describe the general case for arbitrary $n$. Let $F$ be a face of the convex hull of $A$. Let $\Psi_F(z) =\min_{p\in F} (p\cdot z+ c_p)$. We know that the sequence of huskings of $\Psi_F$ stabilises on a function $\psi_F$ by induction hypothesis. Define $\tilde\Psi  = \min_F(\psi_F)$ where $F$ runs by all faces of the convex hull of $A$. 
It is enough to prove stabilisation of $(\tilde\Psi)_k$ as $k\to\infty$. 

First, we show that the support of $(\tilde\Psi)_1-\tilde\Psi$ is finite. Suppose not. As in the proof of three dimensional case, we can prove that the distance between the support of $(\tilde\Psi)_1-\tilde\Psi$ and the codimension one faces of the corner locus of $\Psi$ is a priori bounded. Indeed, we cut the corresponding soliton to copies of the fundamental domain of the periodic action, consider the difference $(\tilde\Psi)_1-\tilde\Psi$ on each of them. Then the distance between any point in the support of $(\tilde\Psi)_1-\tilde\Psi$ and the codimension one faces of the corner locus of $\Psi$ is estimated by the product of the diameter of the fundamental domain and the total number of functions on the fundamental domain with the values $0$ and $-1$. Then, in a finite neighborhood of codimension one faces of the corner locus of $\Psi$ we have similar procedure and establish that the distance from the support of $(\tilde\Psi)_1-\tilde\Psi$ to the codimension two faces is a priori bounded (otherwise we could decrease $\tilde\Psi$ there which contradicts the induction hypothesis). 

So we proved that the support of $(\tilde\Psi)_1-\tilde\Psi$ is finite. So there exists a vertex $v$ such that $\tilde\Psi(v) - (\tilde\Psi)_k(v) = k$, and so the growth of the support of $\tilde\Psi - (\tilde\Psi)_k$ is at least linear in $k$ (the argument is exactly the same as in three-dimensional case).

Then one shows that the support of $\tilde\Psi - (\tilde\Psi)_k$ (shown in Figure~\ref{fig_2} near the vertex) grows at most linear in $k$ in the same way as above we have shown that the support of $\tilde\Psi - (\tilde\Psi)_1$ is finite. Then, as in the three dimensional case, we show that there is a large ball in the support of $\tilde\Psi - (\tilde\Psi)_k$ where $(\tilde\Psi)_k$ is harmonic, and so it is linear. But this contradicts to our hypothesis that the lattice points of the convex hull of $A$ are only the vertices of this convex hull. This finishes the proof.

\end{proof}

\section{Conflict of Interest}
Not applicable.

\bibliographystyle{abbrv}

\end{document}